\documentclass[a4paper,twoside,11pt]{article}
\usepackage{a4wide,graphicx,fancyhdr,amsmath,amssymb,mathtools,yfonts}
\usepackage[all]{xy}
\usepackage[utf8]{inputenc}
\usepackage{amsthm}
\usepackage[english]{babel}
\usepackage{chngcntr}
\usepackage{ifthen}
\usepackage{calc}
\usepackage{hyperref}

\counterwithin*{equation}{section}
\counterwithin*{equation}{subsection}


\setlength\headheight{20pt}
\addtolength\topmargin{-10pt}
\addtolength\footskip{20pt}

\fancypagestyle{plain}{%
\fancyhf{}
\fancyhead[LO,RE]{\sffamily\bfseries\large Leiden University}
\fancyhead[RO,LE]{\sffamily\bfseries\large Research}
\fancyfoot[LO,RE]{\sffamily\bfseries\large Mathematical Institute}
\fancyfoot[RO,LE]{\sffamily\bfseries\thepage}

}

\pagestyle{fancy}
\fancyhf{}
\fancyhead[RO,LE]{\sffamily\bfseries\large Leiden University}
\fancyhead[LO,RE]{\sffamily\bfseries\large Research}
\fancyfoot[LO,RE]{\sffamily\bfseries\large Mathematical Institute}
\fancyfoot[RO,LE]{\sffamily\bfseries\thepage}

\newtheorem{theorem}{Theorem}
\newtheorem{lemma}[theorem]{Lemma}

\newtheorem{corollary}[theorem]{Corollary}

\newcounter{Constant}[chapter]
\newcounter{constant}[chapter]

\newcommand{\Cst}[1]
{
\ifthenelse{\value{#1}=0}
  {\addtocounter{Constant}{1}\setcounter{#1}{\value{Constant}}f_{\arabic{#1}}}
  {f_{\arabic{#1}}}
}

\newcommand{\cst}[1]
{
\ifthenelse{\value{#1}=0}
  {\addtocounter{constant}{1}\setcounter{#1}{\value{constant}}c_{\arabic{#1}}}
  {c_{\arabic{#1}}}
}


\title{\vspace{-\baselineskip}\sffamily\bfseries On the equation $x + y = 1$ in finitely generated groups in positive characteristic}
\author{\huge{Peter Koymans, Carlo Pagano} \\
\\{\tt p.h.koymans@math.leidenuniv.nl}
\\{\tt carlein90@gmail.com}}

\date{\today}


\begin{document}
\maketitle

\section{Introduction}
Let $G$ be a subgroup of $\mathbb{C}^\ast \times \mathbb{C}^\ast$ with coordinatewise multiplication. Assume that the rank $\text{dim}_\mathbb{Q} \ G \otimes_\mathbb{Z} \mathbb{Q} = r$ is finite. Beukers and Schlickewei \cite{BS} proved that the equation
\[
x + y = 1
\]
in $(x, y) \in G$ has at most $2^{8r + 8}$ solutions. A key feature of their upper bound is that it depends only on $r$. 

In this paper we will analyze the characteristic $p$ case. To be more precise, let $p > 0$ be a prime number and let $K$ be a field of characteristic $p$. Let $G$ be a subgroup of $K^\ast \times K^\ast$ with $\text{dim}_\mathbb{Q} \ G \otimes_\mathbb{Z} \mathbb{Q} = r$ finite. Then Voloch proved in \cite{Voloch} that an equation
\[
ax + by = 1 \text{ in } (x, y) \in G
\]
for given $a, b \in K^\ast$ has at most $p^r(p^r + p - 2)/(p - 1)$ solutions $(x, y) \in G$, unless $(a, b)^n \in G$ for some $n \geq 1$. 

Voloch also conjectured that this upper bound can be replaced by one depending only on $r$. Our main theorem answers this conjecture positively.

\begin{theorem}
\label{tVoloch}
Let $K$, $G$, $r$, $a$ and $b$ be as above. Then the equation
\begin{align}
\label{eVoloch}
ax + by = 1
\end{align}
in $(x, y) \in G$ has at most $31 \cdot 19^{r + 1}$ solutions $(x, y)$ unless $(a, b)^n \in G$ for some $n \geq 1$ with $(n, p) = 1$.
\end{theorem}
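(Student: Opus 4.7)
The plan is to combine three ingredients: a saturation reduction, a Frobenius descent producing an infinite family of twisted equations, and a quantitative Beukers--Schlickewei-type counting estimate applied to the whole family. First I would replace $G$ by its prime-to-$p$ saturation
\[
\widetilde G \;=\; \{g \in K^\ast \times K^\ast : g^n \in G \text{ for some } n \geq 1 \text{ with } (n,p)=1\},
\]
which has the same $\Q$-rank $r$, contains $G$, and is closed under extraction of prime-to-$p$ roots. The exceptional hypothesis in Theorem \ref{tVoloch} becomes the clean statement $(a,b) \notin \widetilde G$, and the number of solutions in $G$ is bounded above by the number of solutions in $\widetilde G$.

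Second, I would exploit the $p$-th power Frobenius $(x,y)\mapsto(x^p,y^p)$, which embeds the solution set of $ax+by=1$ in $\widetilde G$ into the solution set of the twisted equation $a^{p^k}X+b^{p^k}Y=1$ for every $k\geq 0$. The hypothesis $(a,b)\notin\widetilde G$ forces the cosets $(a^{p^k},b^{p^k})\widetilde G$ to cycle through many distinct classes: any collapse between levels $i<j$ would give $(a,b)^{p^i(p^{j-i}-1)}\in\widetilde G$, and since $(p^{j-i}-1,p)=1$, the prime-to-$p$ saturation yields $(a,b)^{p^i}\in\widetilde G$. A short descent on the $p$-power torsion of $(K^\ast\times K^\ast)/\widetilde G$ then shows that such a chain can only terminate in the theorem's exceptional case, so I obtain many pairwise inequivalent twisted equations whose solution sets embed compatibly into $\widetilde G$.

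Third, the quantitative core is a Beukers--Schlickewei-style uniform counting estimate applied to this pooled family of solutions. The shape of the constants suggests an induction on $r$ in which, at each step, solutions split into at most $19$ classes — indexed, most likely, by a finite set of valuations on a finitely generated model of $K$ together with a pigeonhole on the dominant monomials of the twisted equations via a Newton-polygon argument — and the base case contributes a factor of $31$. The Frobenius family from Step 2 is used to ensure that the $p$-dependent part of Voloch's estimate is absorbed into the twist parameter rather than propagating through the recursion.

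The main obstacle is precisely this last point: eliminating the $p$-dependence in Voloch's bound $p^r(p^r+p-2)/(p-1)$. The key technical issue is arranging the inductive splitting of $\widetilde G$ so that each step of the recursion discards a subgroup of absolutely bounded index and so that each Frobenius twist contributes at most one new solution modulo that subgroup; only then can the multiplicative constants $31$ and $19$ remain independent of $p$. Tracking the compounding of these constants through the descent, while simultaneously verifying that no Frobenius stabilisation occurs outside the exceptional case, is where I expect the heart of the argument to lie.
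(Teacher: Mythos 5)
Your proposal does not recover the paper's argument and has two genuine gaps. The intended derivation of Theorem \ref{tVoloch} from Theorem \ref{tXY} is a short formal step that your sketch never finds: one adjoins the single element $(a,b)$ to $G$ to form a group $G'$ of rank at most $r+1$, observes that every solution of $ax+by=1$ in $G$ yields the solution $(ax,by)$ of $X+Y=1$ in $G'$, and applies Theorem \ref{tXY} to $G'$ --- this is the sole source of the exponent $r+1$ in the bound. Every such solution is then a $p^t$-th power of one of at most $31\cdot 19^{r+1}$ solutions lying outside $G'^p$; writing that primitive solution as $(a^k x_0, b^k y_0)$ gives $(a,b)^{kp^t-1}\in G$, and either $kp^t-1$ is coprime to $p$ (the exceptional case) or $t=0$, in which case $(ax,by)$ is itself one of the at most $31\cdot 19^{r+1}$ primitive solutions. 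Your Step 3 instead attempts to rebuild the entire counting machinery from scratch (``the shape of the constants suggests an induction on $r$\dots'', ``most likely\dots''), and you concede that this is where ``the heart of the argument'' lies; no actual counting argument is supplied, and nothing in your set-up explains why the exponent should be $r+1$ rather than $r$, since your saturation $\widetilde G$ keeps the rank equal to $r$ throughout. This is the main missing idea.

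Separately, your Frobenius-descent step contains a false claim. From a collapse $(a^{p^i},b^{p^i})\widetilde G=(a^{p^j},b^{p^j})\widetilde G$ you correctly extract $(a,b)^{p^i}\in\widetilde G$, but you then assert that a ``short descent on the $p$-power torsion'' forces the exceptional case. It does not: in characteristic $p$ one can have $(a,b)^{p}\in G$ while $(a,b)^n\notin G$ for every $n\geq 1$ coprime to $p$ --- take $K=\mathbb{F}_p(s)$, $G$ generated by $(s^p,(1-s)^p)$ and $(a,b)=(s,1-s)$ --- because $(K^\ast\times K^\ast)/\widetilde G$ genuinely has $p$-torsion. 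This is precisely the delicate case the paper isolates (its $t=0$ with $p\mid k-1$), and there the correct conclusion is not the exceptional case but the numerical bound itself; your chain argument simply stalls there with no way to proceed. You would also need to handle the degenerate case $ax,by\in\mathbb{F}_q$ (which yields $(a,b)^{q-1}\in G$ with $q-1$ coprime to $p$) and to justify the reduction from an arbitrary field $K$ and finite-rank $G$ to the finitely generated situation in which $\widetilde G$ is actually finitely generated; both points are treated explicitly in the paper and omitted from your sketch.
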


Our main theorem will be a consequence of the following theorem.

\begin{theorem}
\label{tXY}
Let $K$ be a field of characteristic $p > 0$ and let $G$ be a finitely generated subgroup of $K^\ast \times K^\ast$ of rank $r$. Then the equation
\begin{align}
\label{eXY}
x + y = 1 \text{ in } (x, y) \in G
\end{align}
has at most $31 \cdot 19^{r}$ solutions $(x, y)$ satisfying $(x, y) \not \in G^p$.
\end{theorem}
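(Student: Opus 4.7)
The plan is to adapt the Evertse / Beukers-Schlickewei counting framework to positive characteristic, using the condition $(x,y) \not\in G^p$ as the essential tool for eliminating Frobenius-driven inflation of solution counts. The first step is a reduction to the function field setting: since $G$ is finitely generated, the field generated by the coordinates of its elements is a finitely generated extension of $\mathbb{F}_p$, hence the function field $K_0$ of a smooth projective variety over a finite extension of $\mathbb{F}_p$. All solutions of (\ref{eXY}) in $G$ lie in $K_0$, so we may replace $K$ by $K_0$. Let $S$ be the finite set of places of $K_0$ where some generator of $G$ has non-zero valuation; then $G \subset \mathcal{O}_S^\ast \times \mathcal{O}_S^\ast$.

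Next, I would apply the function field analogue of the $S$-unit equation bound, obtained from Mason-Stothers, to get an a priori height bound of the form $H(x) \leq C(g(K_0),|S|)$ for any non-constant solution $(x,y) \in G$. Combined with the partition of $G$ into cosets of $G^p$, this reduces the counting problem to bounding the number of \emph{Frobenius-primitive} solutions modulo $G^p$. The factor $19^{r}$ in the final bound should emerge from an $r$-dimensional cone covering of $G \otimes_{\Z} \mathbb{R}$: one partitions solutions according to the valuations of their coordinates at the places of $S$, obtaining at most $\sim 19^{r}$ cones, and shows via a zero-estimate argument (of Wronskian / Vandermonde type, suitably adapted to characteristic $p$) that each cone can contain only $O(1)$ non-trivial solutions.

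The main obstacle I anticipate is the interaction with Frobenius in this counting step. Without the restriction $(x,y) \not\in G^p$, the map $(x,y) \mapsto (x^p,y^p)$ would turn any single solution into an infinite family and wreck any uniform bound. The condition $(x,y) \not\in G^p$ ensures that every solution we count is ``new'' modulo Frobenius, and the decisive technical point will be to show that, within each cone of the covering, only finitely many Frobenius-primitive solutions can occur, with a bound that is \emph{independent} of $p$ and of $K_0$. This uniformity is what goes beyond the characteristic-zero argument; achieving it should require combining Mason-Stothers iteratively with the $\mathbb{F}_p$-vector space structure of $G/G^p$, exploiting the fact that the Frobenius twist of a primitive solution automatically lies in $G^p$ and therefore does not contribute to the count.
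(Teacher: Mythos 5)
Your overall skeleton (reduce to a function field, work with valuations and heights, cover the rank-$r$ space by $\sim(1+2/\theta)^r$ cones to produce the $19^r$, and treat Frobenius via the primitivity condition) matches the paper's strategy, but two of your load-bearing steps are gaps. First, the appeal to Mason--Stothers for an a priori bound $H(x)\leq C(g(K_0),|S|)$ is both false as stated and structurally wrong for this theorem: the function-field $abc$ inequality fails on $p$-th powers (and $(x,y)\notin G^p$ does \emph{not} imply $x\notin K^p$ --- one can have $x=u^p$, $y=v^p$ with $(u,v)\notin G$; the paper has to pass to the division group $G'$ in Corollary \ref{boundpr2} precisely for this reason), and even where it applies it yields a bound depending on the genus and $|S|$, which would contaminate the final count with data of $K$ and $G$ beyond the rank $r$. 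The paper never bounds heights absolutely; it only ever compares the heights of two distinct solutions (Lemmas \ref{difference between solutions} and \ref{difference between solutions and multiples of solutions}), which is what keeps the final constant a function of $r$ alone.

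Second, ``a Wronskian/Vandermonde zero-estimate adapted to characteristic $p$ gives $O(1)$ solutions per cone'' is exactly the point where characteristic $p$ bites, and you give no mechanism for the adaptation --- Wronskians are the prototypical object that degenerates on $p$-th powers. The paper's actual engine is the Beukers--Schlickewei binary forms $W_N$, whose key determinant identity (Lemma \ref{magic polynomials}) has an integer constant $c_N$ that must be shown nonzero mod $p$; this is done via the hypergeometric evaluation $W_N(2,-1)=4^N\binom{3N/2}{N}$ (Lemma \ref{magic identity}), and only works for $N<\tfrac{p}{3}-2$. Consequently the resulting ``anti-gap principle'' controls only height ratios below roughly $\tfrac{2}{3}p$, and one needs the Frobenius stability $p\mathcal{S}\subseteq\mathcal{S}$ to renormalize every primitive solution into a window $[\|u_0\|,p\|u_0\|)$ via $\lambda(u)=\|u\|/(p^{k(u)}\|u_0\|)$, plus the injectivity of $\lambda$ on the primitive solutions, to get a per-cone count depending only on $\theta$. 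Your remark that the Frobenius twist of a primitive solution lies in $G^p$ is true but does not by itself deliver this; and you would also need the paper's preliminary elementary bound of $p^r$ (Corollary \ref{boundpr2}) to dispose of small primes, since the whole apparatus above requires $p>7$.
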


Clearly, the last condition is necessary to guarantee finiteness. Indeed if we have any solution to $x + y = 1$, then we get infinitely many solutions $x^{p^k} + y^{p^k} = 1$ for $k \in \mathbb{Z}_{\geq 0}$ due to the Frobenius operator.

The set-up of the paper is as follows. We start by introducing the basic theory about valuations that is needed for our proofs. Then we derive Theorem \ref{tXY} by generalizing the proof of Beukers and Schlickewei \cite{BS} to positive characteristic. We remark that their proof heavily relies on techniques from diophantine approximation. Most of the methods from diophantine approximation can not be transferred to positive characteristic, so that this is possible with the method of Beukers and Schlickewei is a surprising feat on its own. It will be more convenient for us to follow \cite{EG}, which is directly based on the proof of Beukers and Schlickewei. Theorem \ref{tVoloch} will be a simple consequence of Theorem \ref{tXY}.

\section{Valuations and heights}
Our goal in this section is to recall the basic theory about valuations and heights without proofs. To prove Theorem \ref{tXY} we may assume without loss of generality that $K = \mathbb{F}_p(G)$. Thus, $K$ is finitely generated over $\mathbb{F}_p$. Note that Theorem \ref{tXY} is trivial if $K$ is algebraic over $\mathbb{F}_p$, so from now on we further assume that $K$ has positive transcendence degree over $\mathbb{F}_p$. The algebraic closure of $\mathbb{F}_p$ in $K$ is a finite field, which we denote by $\mathbb{F}_q$. Then there is an absolutely irreducible, normal projective variety $V$ defined over $\mathbb{F}_q$ such that its function field $\mathbb{F}_q(V)$ is isomorphic to $K$.

Fix a projective embedding of $V$ such that $V \subseteq \mathbb{P}^M_{\mathbb{F}_q}$ for some positive integer $M$. A prime divisor $\mathfrak{p}$ of $V$ over $\mathbb{F}_q$ is by definition an irreducible subvariety of $V$ of codimension one. Recall that for a prime divisor $\mathfrak{p}$ the local ring $\mathcal{O}_\mathfrak{p}$ is a discrete valuation ring, since $V$ is non-singular in codimension one. Following \cite{L2} we will define heights on $V$. To do this, we start by defining a set of normalized discrete valuations
\[
M_K := \{\text{ord}_\mathfrak{p} : \mathfrak{p} \text{ prime divisor of } V\},
\]
where $\text{ord}_\mathfrak{p}$ is the normalized discrete valuation of $K$ corresponding to $\mathcal{O}_\mathfrak{p}$. If $v = \text{ord}_\mathfrak{p} \in M_K$, we define for convenience $\deg v := \deg \mathfrak{p}$ with $\deg \mathfrak{p}$ being the projective degree in $\mathbb{P}^M_{\mathbb{F}_q}$. Then the set $M_K$ satisfies the sum formula
\[
\sum_{v \in M_K} v(x) \deg v = 0
\]
for $x \in K^\ast$. This is indeed a well-defined sum, since for $x \in K^\ast$ there are only finitely many valuations $v$ satisfying $v(x) \neq 0$. Furthermore, we have $v(x) = 0$ for all $v \in M_K$ if and only if $x \in \mathbb{F}_q^\ast$. If $P$ is a point in $\mathbb{A}^{n + 1}(K) \setminus \{0\}$ with coordinates $(y_0, \ldots, y_n)$ in $K$, then its homogeneous height is
\[
H_K^{\text{hom}}(P) = -\sum_{v \in M_K} \min_i \{v(y_i)\} \deg v
\]
and its height
\[
H_K(P) = H_K^{\text{hom}}(1, y_0, \ldots, y_n).
\]
We will need the following properties of the height.

\begin{lemma}
\label{lHeight}
Let $P \in \mathbb{A}^{n + 1}(K) \setminus \{0\}$. The height defined above has the following properties: \\
1) $H_K^{\text{hom}}(\lambda P) = H_K^{\text{hom}}(P)$ for $\lambda \in K^\ast$. \\
2) $H_K^{\text{hom}}(P) \geq 0$ with equality if and only if $P \in \mathbb{P}^n(\mathbb{F}_q)$.
\end{lemma}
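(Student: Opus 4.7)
The plan is to derive both assertions from the only two tools introduced before the lemma: the sum formula $\sum_{v \in M_K} v(x) \deg v = 0$ for $x \in K^\ast$, and the fact that $v(x) = 0$ for all $v \in M_K$ precisely when $x \in \mathbb{F}_q^\ast$.

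For (1), I would write $P = (y_0, \ldots, y_n)$ and note that for $\lambda \in K^\ast$ one has $v(\lambda y_i) = v(\lambda) + v(y_i)$ for every coordinate and every valuation. Taking the minimum over $i$ pulls the term $v(\lambda)$ out, so $\min_i v(\lambda y_i) = v(\lambda) + \min_i v(y_i)$. Substituting into the definition splits $H_K^{\text{hom}}(\lambda P)$ as $H_K^{\text{hom}}(P) - \sum_v v(\lambda) \deg v$, and the second term vanishes by the sum formula applied to $\lambda$.

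For the inequality in (2), fix any index $i_0$ with $y_{i_0} \neq 0$; then $\min_i v(y_i) \leq v(y_{i_0})$ for every $v$, so the sum formula for $y_{i_0}$ yields $H_K^{\text{hom}}(P) \geq -\sum_v v(y_{i_0}) \deg v = 0$. For the equality clause, I would first invoke (1) to rescale $P$ so that some coordinate equals $1$; then $\min_i v(y_i) \leq 0$ for every $v$, and $H_K^{\text{hom}}(P)$ becomes a sum of non-negative terms. Its vanishing forces $\min_i v(y_i) = 0$ for every $v$, so each nonzero $y_i$ satisfies $v(y_i) \geq 0$ for all $v$. A second application of the sum formula, combined with non-negativity of each summand $v(y_i) \deg v$, then gives $v(y_i) = 0$ for all $v$, and the characterization above yields $y_i \in \mathbb{F}_q^\ast$. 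Hence $P \in \mathbb{P}^n(\mathbb{F}_q)$; the converse is immediate because elements of $\mathbb{F}_q$ have $v(y_i) \in \{0, +\infty\}$ for every $v$.

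There is no real obstacle here: the content is entirely packaged in the sum formula and the characterization of $\mathbb{F}_q^\ast$, both of which are quoted. The only place where one must be a little careful is to remember to rescale by (1) before extracting equality in (2), since without that normalization $\min_i v(y_i)$ need not be bounded above by zero and the positivity argument fails.
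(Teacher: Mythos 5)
Your proof is correct. The paper itself states this lemma without proof (the section explicitly recalls the theory ``without proofs,'' deferring to Lang), and your argument --- deducing part (1) from the sum formula applied to $\lambda$, and part (2) by rescaling so that one coordinate is $1$ and then applying the sum formula twice, once to get termwise non-negativity and once to each coordinate --- is exactly the standard argument one would supply, including the necessary care about normalizing before extracting the equality case.
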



\section{Proof of Theorem \ref{tXY}}
This section is devoted to the proof of Theorem \ref{tXY}. We will follow the proof in \cite{EG}, see Section 6.4, with some crucial modifications to take care of the presence of the Frobenius map. Let us start with a simple lemma.

\begin{lemma}
\label{boundpr}
The equation
\begin{align}
\label{eXY2}
x + y = 1 \text{ in } (x, y) \in G
\end{align}
has at most $p^r$ solutions $(x, y)$ satisfying $x \not \in K^p$ and $y \not \in K^p$.
\end{lemma}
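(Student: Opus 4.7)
My plan is to show that the set of solutions with $x, y \not\in K^p$ injects into the finite quotient $G/G^p$, which has exactly $p^r$ elements.

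First I would verify the equality $|G/G^p| = p^r$. Since $K$ has characteristic $p$, every torsion element of $K^*$ has order coprime to $p$, because the Frobenius map $\alpha \mapsto \alpha^p$ is injective on $K^*$. Writing the finitely generated abelian group $G$ as $\mathbb{Z}^r \oplus T$ with $T$ its torsion subgroup, $T$ is thus $p$-divisible, so $G^p$ corresponds to $p\mathbb{Z}^r \oplus T$ and $|G/G^p| = p^r$.

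The main step is to show that each coset of $G^p$ in $G$ contains at most one solution $(x, y)$ to $x + y = 1$ with $x, y \not\in K^p$. Suppose $(x_1, y_1)$ and $(x_2, y_2)$ are two such solutions lying in the same coset. Then there exists $(u, v) \in G$ with $x_1 = x_2 u^p$ and $y_1 = y_2 v^p$. Substituting $x_2 = 1 - y_2$ into $x_1 + y_1 = 1$ and rearranging gives
\[
y_2 (u^p - v^p) = u^p - 1.
\]
Now I would use the Frobenius identities $u^p - v^p = (u - v)^p$ and $u^p - 1 = (u - 1)^p$, valid in characteristic $p$. If $u \neq v$, I can divide to obtain
\[
y_2 = \left( \frac{u - 1}{u - v} \right)^p \in K^p,
\]
contradicting $y_2 \not\in K^p$. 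If instead $u = v$, then $(u - 1)^p = 0$, forcing $u = 1 = v$ and hence $(x_1, y_1) = (x_2, y_2)$.

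Combining the two observations yields the bound $p^r$. I do not anticipate any real obstacle; the crucial point is the additivity of the $p$-th power map, which is precisely what forces any nontrivial displacement within a $G^p$-coset to drag the solution into $K^p$.
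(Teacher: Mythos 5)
Your proposal is correct and follows essentially the same route as the paper: both arguments show that each coset of $G^p$ contains at most one such solution by using the additivity of the Frobenius to force $x$ or $y$ into $K^p$ whenever two distinct solutions were congruent mod $G^p$, and both then compute $|G/G^p| = p^r$ from the fact that the torsion of $G$ has order prime to $p$. The only cosmetic difference is that the paper phrases the coset argument via the invertibility of a $2\times 2$ matrix rather than your direct substitution.
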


\begin{proof}
Let $u = (u_1, u_2)$ and $v = (v_1, v_2)$ be two solutions of (\ref{eXY2}). We claim that $u \equiv v \mod G^p$ implies $u = v$. Indeed, if $u \equiv v \mod G^p$, we can write $v_1 = u_1 \gamma^p$ and $v_2 = u_2 \delta^p$ with $(\gamma, \delta) \in G$. In matrix form this means that
\[
\begin{pmatrix}
1 & 1 \\
\gamma^p & \delta^p
\end{pmatrix}
\begin{pmatrix}
u_1 \\
u_2
\end{pmatrix}
=
\begin{pmatrix}
1 \\
1
\end{pmatrix}
.
\]
For convenience we define
\[
A :=
\begin{pmatrix}
1 & 1 \\
\gamma^p & \delta^p
\end{pmatrix}
.
\]
If $A$ is invertible, we find that $u_1, u_2 \in K^p$ contrary to our assumptions. So $A$ is not invertible, which implies that $\gamma = \delta = 1$. This proves the claim.

The claim implies that the number of solutions is at most $|G/G^p|$. Let $\mathbb{F}_q$ be the algebraic closure of $\mathbb{F}_p$ in $G$. It is a finite extension of $\mathbb{F}_p$, since $G$ is finitely generated over $\mathbb{F}_p$. It follows that $G^{\text{tors}} \subseteq \mathbb{F}_q^\ast \times \mathbb{F}_q^\ast$. Hence $|G^{\text{tors}}| \mid (q - 1)^2$, which is co-prime to $p$. We conclude that $|G/G^p| = p^r$ as desired.
\end{proof}

Lemma \ref{boundpr} gives the following corollary.

\begin{corollary}
\label{boundpr2}
The equation
\begin{align}
\label{eXY3}
x + y = 1 \text{ in } (x, y) \in G
\end{align}
has at most $p^r$ solutions $(x, y)$ satisfying $(x, y) \not \in G^p$.
\end{corollary}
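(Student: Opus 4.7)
The plan is to reduce Corollary \ref{boundpr2} to Lemma \ref{boundpr} by replacing $G$ with a finitely generated overgroup of the same rank $r$, obtained by adjoining maximal $p$-power roots of the bad solutions. First, I would dispose of torsion solutions: as in the proof of Lemma \ref{boundpr}, $G^{\mathrm{tors}}$ sits inside $\mathbb{F}_q^\ast \times \mathbb{F}_q^\ast$ and has order coprime to $p$, so $G^{\mathrm{tors}} = (G^{\mathrm{tors}})^p \subseteq G^p$, and torsion solutions are thus excluded from the count.

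For a non-torsion bad solution $(x, y) \in G \setminus G^p$, the Frobenius identity $1 - \alpha^p = (1 - \alpha)^p$ forces $x \in K^p$ if and only if $y \in K^p$, so both coordinates share a common maximal $p$-power divisibility in $K$. Since $K$ is finitely generated over $\mathbb{F}_p$, the perfect core $\bigcap_n K^{p^n}$ equals the finite field $\mathbb{F}_q$, so for non-torsion $x$ the integer $k := \max\{j \geq 0 : x \in K^{p^j}\}$ is finite. I would then define the $p$-primitive root $(x', y') := (x^{1/p^k}, y^{1/p^k}) \in K^\ast \times K^\ast$; taking unique $p^k$-th roots in $x + y = 1$ gives $x' + y' = 1$, and $x', y' \notin K^p$ by maximality of $k$. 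The map $(x, y) \mapsto (x', y')$ is injective on the bad solutions, since two distinct bad solutions with the same $p$-primitive root would force one to be a positive $p$-power of the other, placing it in $G^p$ and contradicting the hypothesis.

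To finish, for any finite subset $S_0$ of the bad solutions I would enlarge $G$ to $G' := \langle G, \{(x', y') : (x, y) \in S_0\} \rangle$. Each added generator satisfies $(x', y')^{p^k} \in G$, so $G'/G$ is a finite $p$-group and $G'$ is therefore finitely generated of rank $r$. The images $(x', y')$ form a set of distinct elements of $G'$ that solve $X + Y = 1$ with both coordinates outside $K^p$, so Lemma \ref{boundpr} applied to $G'$ yields $|S_0| \leq p^r$; letting $S_0$ range over all finite subsets gives the desired bound. The main delicate input is the finite $p$-divisibility of non-torsion elements of $K$, which rests on $K$ being finitely generated over $\mathbb{F}_p$; the passage through finite $S_0$ avoids any circularity in applying Lemma \ref{boundpr}.
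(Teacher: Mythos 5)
Your proof is correct, and it follows the same overall strategy as the paper --- strip the maximal $p$-power from each solution outside $G^p$ to get a ``primitive'' solution with both coordinates outside $K^p$, inject these into a larger group of the same rank $r$, and invoke Lemma \ref{boundpr} --- but it differs in one genuinely useful way. The paper passes to the full division group $G' = \{(x,y) : (x^N,y^N)\in G \text{ for some } N\}$ and cites as a ``well known fact'' that this is finitely generated, which is a nontrivial input (it rests on finiteness properties of $S$-unit groups in function fields). You instead adjoin only the finitely many $p$-power roots actually needed, for a finite subset $S_0$ of the bad solutions, so that $G'/G$ is a finite abelian $p$-group and finite generation plus preservation of the rank $r$ are immediate; the uniform bound $|S_0|\le p^r$ over all finite $S_0$ then gives the corollary with no appeal to the division-group theorem. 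You also make explicit several points the paper leaves as ``easily verified'': that torsion solutions lie in $G^p$ (since $|G^{\mathrm{tors}}|$ is prime to $p$), that the two coordinates have the same $p$-divisibility exponent via $(1-\alpha)^p = 1-\alpha^p$, that the root-taking terminates because $\bigcap_n K^{p^n}=\mathbb{F}_q$ for $K$ finitely generated over $\mathbb{F}_p$ (the paper uses heights here instead, which is an equally valid termination argument), and that the primitive-root map is injective because coincidence of primitive roots would force one solution to be a $p$-th power of an element of $G$. The net effect is a more self-contained proof of the same bound.
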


\begin{proof}
Define
\[
G' := \{(x, y) \in K \times K : (x^N, y^N) \in G \text{ for some } N \in \mathbb{Z}_{>0}\}.
\]
It is a well known fact that $G'$ is finitely generated if $G$ and $K$ are. It follows that $G'$ is a finitely generated group of rank $r$. To complete the proof we will give an injective map from the solutions $(x, y) \in G$ of (\ref{eXY3}) satisfying $(x, y) \not \in G^p$ to the solutions $(x', y') \in G'$ of (\ref{eXY3}) satisfying $(x', y') \not \in G'^p$.

So let $(x, y) \in G$ be a solution of (\ref{eXY3}). We remark that $x, y \not \in \mathbb{F}_q$. Hence we can repeatedly take $p$-th roots until we get $x', y' \not \in K^p$. Using heights one can prove that this indeed stops after finitely many steps. Then it is easily verified that $(x', y') \in G'$ is a solution of (\ref{eXY3}) and that the map thus defined is injective. Now apply Lemma \ref{boundpr}.
\end{proof}

By Corollary \ref{boundpr2} we may assume that $p$ is sufficiently large throughout, say $p > 7$. Both the proof in \cite{EG} and our proof rely on very special properties of the family of binary forms $\{W_N(X,Y)\}_{N \in \mathbb{Z}_{>0}}$ defined by the formula 
\[
W_{N}(X, Y) = \sum_{m = 0}^{N} \binom{2N - m}{N - m} \binom{N + m}{m} X^{N - m} (-Y)^{m}.
\] 
We have for all positive integers $N$ that $W_N(X,Y) \in \mathbb{Z}[X,Y]$. Furthermore, setting $Z = -X - Y$, the following statements hold in $\mathbb{Z}[X,Y]$.

\begin{lemma}
\label{magic polynomials}
1) $W_N(Y, X) = (-1)^N W_N(X, Y)$. \\
2) $X^{2N + 1} W_{N}(Y, Z) + Y^{2N + 1} W_N(Z, X) + Z^{2N + 1} W_N(X,Y) = 0$. \\
3) There exist a non-zero integer $c_N$ such that 
\[
\text{det}\begin{pmatrix}
Z^{2N + 1} W_N(X, Y) & Y^{2N + 1} W_N(Z, X)  \\
Z^{2N + 3} W_{N + 1}(X, Y) & Y^{2N + 3} W_{N + 1}(Z, X) \\
\end{pmatrix} 
= c_N(XYZ)^{2N+1}(X^2+XY+Y^2).
\]
\end{lemma}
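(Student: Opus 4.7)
The plan is to dispatch the three parts in order of increasing depth. Part 1 is a bookkeeping symmetry: the substitution $m \mapsto N-m$ in the defining sum leaves the coefficient $\binom{2N-m}{N-m}\binom{N+m}{m}$ invariant (the two binomials become each other under the swap), and a one-line sign count gives $Y^{N-m}(-X)^{m} = (-1)^N X^{N-m'}(-Y)^{m'}$ after reindexing with $m' = N-m$, which delivers $W_N(Y,X) = (-1)^N W_N(X,Y)$.

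Part 2 is the central identity and carries the real content. It asserts that the cyclic sum $F(X,Y,Z) := X^{2N+1}W_N(Y,Z) + Y^{2N+1}W_N(Z,X) + Z^{2N+1}W_N(X,Y)$ lies in the ideal $(X+Y+Z) \subset \mathbb{Z}[X,Y,Z]$. My plan is to deduce this from the Pad\'e-approximation interpretation: $W_N(X,Y)$ arises (up to scaling) as the numerator of the diagonal $[N/N]$ Pad\'e approximant of a suitable hypergeometric function in $Y/X$, and the cyclic identity encodes the compatibility of three such approximations under the linear relation $X+Y+Z=0$. A self-contained alternative is to expand $(-Z)^m = (X+Y)^m$ via the binomial theorem in each summand, collect the coefficient of each monomial $X^iY^j$, and verify that the resulting triple sum of products of binomials vanishes by a Chu--Vandermonde-type identity. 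The main obstacle is locating the correct combinatorial collapse; once in hand the computation is mechanical. A sanity check along the way is the specialization $Y=0$, where $W_N(0,Y) = \binom{2N}{N}(-Y)^N$ causes the three terms to telescope to zero explicitly.

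For Part 3, set $A_N := Z^{2N+1}W_N(X,Y)$, $B_N := Y^{2N+1}W_N(Z,X)$ and $C_N := X^{2N+1}W_N(Y,Z)$, so that $A_N + B_N + C_N = 0$ by Part 2. The determinant in question is $D_N := A_N B_{N+1} - A_{N+1} B_N$. Substituting $A = -B - C$ rewrites it as $D_N = B_N C_{N+1} - B_{N+1} C_N$, which manifestly has $X^{2N+1}$ as a factor. The cyclic permutation $(X,Y,Z) \mapsto (Y,Z,X)$ sends $(A_N, B_N, C_N)$ to $(C_N, A_N, B_N)$; a short calculation using $A+B+C=0$ shows that $D_N$ itself is cyclic invariant, so divisibility by $Y^{2N+1}$ and $Z^{2N+1}$ follows by symmetry and $(XYZ)^{2N+1} \mid D_N$ in $\mathbb{Z}[X,Y]$ (with $Z = -X-Y$). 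A degree count, $\deg D_N = 6N+5$ versus $\deg (XYZ)^{2N+1} = 6N+3$, shows the quotient is homogeneous of degree $2$. Working in $\mathbb{Z}[X,Y,Z]/(X+Y+Z)$, the space of cyclic-invariant quadratics is one-dimensional (diagonalize the $\mathbb{Z}/3$-action on the basis $\{X^2, XY, Y^2\}$) and spanned by $X^2+XY+Y^2$. Hence $D_N = c_N(XYZ)^{2N+1}(X^2+XY+Y^2)$ for some integer $c_N$, and $c_N \neq 0$ is verified by evaluating both sides at a concrete point such as $(X,Y) = (1,1)$ using the explicit leading coefficients of $W_N$ and $W_{N+1}$.
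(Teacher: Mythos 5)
The paper offers no proof of this lemma at all---it simply cites Lemma 6.4.2 of Evertse--Gy\H ory---so any self-contained argument is necessarily a different route, and parts of yours are genuinely good. Part 1 is correct as written. Part 3 is, \emph{modulo Part 2}, an essentially complete and rather elegant argument: the rewriting $A_NB_{N+1}-A_{N+1}B_N=B_NC_{N+1}-B_{N+1}C_N$ via $A+B+C=0$, the resulting divisibility by $X^{2N+1}$, $Y^{2N+1}$, $Z^{2N+1}$ (these being pairwise coprime in $\mathbb{Z}[X,Y]$ with $Z=-X-Y$), the cyclic invariance of the determinant, the degree count, and the one-dimensionality of the space of cyclic-invariant quadratic forms modulo $X+Y+Z$ all check out and force the shape $c_N(XYZ)^{2N+1}(X^2+XY+Y^2)$ with $c_N\in\mathbb{Z}$.

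There are, however, two genuine gaps. The main one is Part 2 itself: you name two strategies (the Pad\'e-approximant interpretation of $W_N$, and expanding $(X+Y)^m$ and invoking a Chu--Vandermonde-type collapse) but carry out neither, and you say explicitly that the needed combinatorial identity has not been located. Since Part 2 is the entire content of the lemma---Part 3 is a formal consequence of it together with the nonvanishing of one integer---this is a plan, not a proof. The second gap is the verification that $c_N\neq 0$. Evaluating at $(X,Y)=(1,1)$ is a poor choice: Part 1 gives $W_N(1,1)=(-1)^NW_N(1,1)$, so $W_N(1,1)=0$ for every odd $N$, one of the two products in the determinant vanishes identically, and you are left needing the nonvanishing of special values such as $W_{N+1}(1,1)$ and $W_N(-2,1)$. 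Establishing such values is precisely the kind of hypergeometric evaluation the paper must prove separately (Lemma \ref{magic identity}, $W_N(2,-1)=4^N\binom{\frac{3}{2}N}{N}$, which is why Corollary \ref{range of safeness} evaluates at $(2,-1,-1)$ rather than at a symmetric point); appealing to ``explicit leading coefficients'' does not circumvent this without an actual computation isolating the degree-two cofactor.
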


\begin{proof}
This is Lemma 6.4.2 in \cite{EG}.
\end{proof}

Since the formulas in the previous lemma hold in $\mathbb{Z}[X,Y]$ they hold in every field $K$. But if $\text{char}(K) = p > 0$ and $p \mid c_N$, then part 3) of Lemma \ref{magic polynomials} tells us that 
\[
det
\begin{pmatrix}
Z^{2N + 1} W_N(X,Y) & Y^{2N + 1} W_N(Z,X)  \\ 
Z^{2N + 3} W_{N+1}(X,Y) & Y^{2N + 3} W_{N+1}(Z,X) \\ 
\end{pmatrix}
= 0
\]
in $K[X,Y]$. The following remarkable identity will be handy later on, when we need that $c_N$ does not vanish modulo $p$.

\begin{lemma} 
\label{magic identity}
For every positive integer $N$, one has $W_N(2,-1) = 4^{N} \binom{\frac{3}{2}N}{N}$.
\end{lemma}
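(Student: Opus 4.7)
My plan is to express $W_N(2,-1)$ as a coefficient extraction and then apply a single algebraic substitution that collapses it to $4^N\binom{\frac{3}{2}N}{N}$. By the Cauchy product, since $\binom{N+m}{m} = [t^m](1-t)^{-N-1}$ and $\binom{2N-m}{N-m}\,2^{N-m} = [t^{N-m}](1-2t)^{-N-1}$, one obtains
\[
W_N(2,-1) \;=\; [t^N] \bigl((1-t)(1-2t)\bigr)^{-N-1} \;=\; \frac{1}{2\pi i}\oint \frac{dt}{t^{N+1}\bigl((1-t)(1-2t)\bigr)^{N+1}},
\]
with a small positively oriented contour around $t = 0$.

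I would then change variable via $y := 4t(1-t)/(1-2t)^2$, which is analytic at the origin with $y = 4t + 12t^2 + \cdots$ (so it is an orientation-preserving isomorphism near $0$) and satisfies the key identity $1+y = (1-2t)^{-2}$. Solving back gives $t = \tfrac{1}{2}\bigl(1-(1+y)^{-1/2}\bigr)$, $1-t = \tfrac{1}{2}\bigl(1+(1+y)^{-1/2}\bigr)$, and $dt = \tfrac{1}{4}(1+y)^{-3/2}\,dy$. The elementary identity $\bigl(\sqrt{1+y}-1\bigr)\bigl(\sqrt{1+y}+1\bigr) = y$ then causes every square-root factor to disappear: the product $t^{-N-1}(1-t)^{-N-1} = \bigl(t(1-t)\bigr)^{-N-1}$ simplifies to $4^{N+1}(1+y)^{N+1}/y^{N+1}$, and combining this with $(1-2t)^{-N-1} = (1+y)^{(N+1)/2}$ and the Jacobian collapses the integrand to $4^N(1+y)^{3N/2}/y^{N+1}\,dy$.

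Hence $W_N(2,-1) = 4^N[y^N](1+y)^{3N/2} = 4^N\binom{\frac{3}{2}N}{N}$, which is the claim. The one subtle point is to fix the branch of $\sqrt{1+y}$ as the formal power series $1 + y/2 - y^2/8 + \cdots$ analytic at $y = 0$; with this convention all manipulations are pure formal-power-series identities and no branch cut interferes with the residue at $y = 0$, so the exponent bookkeeping is routine and the proof is complete.
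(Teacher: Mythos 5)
Your proof is correct, and it takes a genuinely different route from the paper. You encode the sum as a coefficient extraction, $W_N(2,-1)=[t^N]\bigl((1-t)(1-2t)\bigr)^{-N-1}$, and then make the rational (quadratic) substitution $y=4t(1-t)/(1-2t)^2$, whose key property $1+y=(1-2t)^{-2}$ makes all half-integer powers cancel and collapses the residue to $4^N[y^N](1+y)^{3N/2}$; I checked the bookkeeping (the exponent of $1+y$ is $(N+1)+\tfrac{N+1}{2}-\tfrac32=\tfrac{3N}{2}$) and the small cases $N=1,2$ ($6$ and $48$), and everything is in order. The paper instead recognizes $\sum_{i}\binom{2N-i}{N}\binom{N+i}{N}2^{-i}$ as $\binom{2N}{N}\,F\bigl(-N,N+1,-2N,\tfrac12\bigr)$ and invokes a closed-form evaluation of a hypergeometric series at $z=\tfrac12$ from the literature (Bailey/Whipple-type formulas). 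Your argument buys self-containedness: it needs no external special-function identities, only the Cauchy product, a residue change of variables (equivalently, the formal-residue substitution rule, since $y=4t+O(t^2)$ has a simple zero at $t=0$), and the binomial series $(1+y)^{3N/2}=\sum_k\binom{3N/2}{k}y^k$ interpreted with generalized binomial coefficients — which is exactly the sense in which the statement's $\binom{3N/2}{N}$ must be read when $N$ is odd. The paper's route is shorter on the page but outsources the real work to a cited identity. The only point worth making explicit in a write-up is the one you already flag: fixing $\sqrt{1+y}$ as the formal binomial series so that $(1-2t)^{-1}=(1+y)^{1/2}$ holds as an identity of power series (both sides equal $1+2t+\cdots$), after which no analytic branch issues arise.
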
 

\begin{proof}
It is enough to evaluate $\sum_{i=0}^{N}  \binom{2N - i}{N} \binom{N + i}{N} 2^{-i}$. We have
\[
\sum_{i = 0}^{N} \binom{2N - i}{N} \binom{N + i}{N} 2^{-i} = \binom{2N}{N} F\left(-N, N + 1, -2N, \frac{1}{2}\right),
\] 
where $F(a,b,c,z)$ is the hypergeometric function defined by the power series $F(a,b,c,z) := \sum_{i = 0}^{\infty} \frac{(a)_i (b)_i}{i! (c)_i}z^{n}$. Here we define for a real $t$ and a non-negative integer $i$ $(t)_i = 1$ if $i = 0$ and for $i$ positive $(t)_i = t (t + 1) \cdot \ldots \cdot (t + i - 1)$. Now the desired result follows from Bailey's formulas where special values of the function $F$ are expressed in terms of values of the $\Gamma$-function, see \cite{Bailey} page 297.
\end{proof}

We obtain the following corollary.

\begin{corollary} 
\label{range of safeness}
Let $p$ be an odd prime number and let $N$ be a positive integer with $N < \frac{p}{3} - 2$. Then $c_N \not \equiv 0 \ \text{mod} \ p$.
\end{corollary}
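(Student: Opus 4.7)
My plan is to derive an explicit closed form for $c_N$ by evaluating the determinant identity in Lemma \ref{magic polynomials}, part 3), at a carefully chosen point, and then to read off non-vanishing modulo $p$ from $p$-adic valuations of the generalized binomial coefficients supplied by Lemma \ref{magic identity}.

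First I would specialize to $X = 2$, $Y = -1$, so that $Z = -X - Y = -1$. Lemma \ref{magic polynomials}, part 1), then gives $W_N(-1, 2) = (-1)^N W_N(2,-1)$ and similarly for $W_{N+1}$; also $Y^{2N+1} = Z^{2N+1} = -1$ (and likewise for the $(2N+3)$-th powers). Consequently the determinant on the left collapses to $-2(-1)^N W_N(2,-1) W_{N+1}(2,-1)$, while the right-hand side evaluates to $c_N \cdot 2^{2N+1} \cdot 3$ using $XYZ = 2$ and $X^2 + XY + Y^2 = 3$. Substituting $W_N(2,-1) = 4^N \binom{3N/2}{N}$ from Lemma \ref{magic identity} and simplifying yields
\[
c_N = \frac{(-1)^{N+1} \cdot 4^{N+1}}{3} \binom{3N/2}{N} \binom{3(N+1)/2}{N+1}.
\]
The hypothesis $N \geq 1$ together with $N < p/3 - 2$ forces $p \geq 11$, so the prefactor $4^{N+1}/3$ is a $p$-adic unit. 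For each $M \in \{N, N+1\}$, after clearing the power of $2$ the numerator of $\binom{3M/2}{M}$ is the product of the integers $M+2, M+4, \ldots, 3M$; the hypothesis gives $3(N+1) < p - 3$, so none of these factors is divisible by $p$, and neither is any factor of $M!$. Hence both binomial coefficients, and therefore $c_N$ itself, are $p$-adic units.

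The main obstacle is step one: identifying the productive specialization. The choice $(X, Y) = (2, -1)$ is suggested by Lemma \ref{magic identity}, and it is the right choice for two further reasons: $Y = Z$ forces the matrix entries to be related by the symmetry of part 1), so the determinant factors cleanly, and $X^2 + XY + Y^2 = 3$ precisely matches the $3$ in the denominator of the closed form of $W_N(2,-1)$, leaving a clean integer formula. After this specialization the remainder is routine $p$-adic bookkeeping.
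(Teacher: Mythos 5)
Your proposal is correct and follows essentially the same route as the paper: evaluate the determinant identity at $(X,Y,Z)=(2,-1,-1)$, reduce to $2\,W_N(2,-1)W_{N+1}(2,-1)$ up to sign via part 1) of Lemma \ref{magic polynomials}, and invoke Lemma \ref{magic identity} to see that the resulting (generalized) binomial coefficients are $p$-adic units. Your write-up is in fact slightly more explicit than the paper's, since you spell out why $\binom{3M/2}{M}$ has $p$-adic valuation zero by listing the integer factors $M+2, M+4, \ldots, 3M$ of its numerator.
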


\begin{proof}
Indeed one has that 
\[
det
\begin{pmatrix}
Z^{2N + 1} W_N(X, Y) & Y^{2N + 1} W_N(Z, X) \\
Z^{2N + 3} W_{N+1}(X, Y) & Y^{2N + 3} W_{N + 1}(Z, X) \\
\end{pmatrix}
\]
evaluated at $(X, Y, Z) = (2, -1, -1)$ gives up to sign $2W_N(2,-1)W_{N+1}(2,-1)$. By the previous proposition, this is a power of $2$ times the product of two binomial coefficients whose top terms are less than $p$, hence it can not be divisible by $p$.
\end{proof}

We now state and prove the analogues of Lemmata 6.4.3-6.4.5 from \cite{EG} for function fields of positive characteristic. 

\begin{lemma}
\label{exterior product}
Let $a, b, c$ be non-zero elements of $K$, and let $(x_i, y_i, z_i)$ for $i = 1, 2$ be two $K$-linearly independent vectors from $K^3$ such that $ax_i + by_i + cz_i=0$ for $i = 1, 2$. Then 
\[
H_K^{\text{hom}}(a, b, c) \leq H_K^{\text{hom}}(x_1, y_1, z_1) + H_K^{\text{hom}}(x_2, y_2, z_2).
\]
\end{lemma}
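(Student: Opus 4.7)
The plan is to exploit the linear-algebra fact that, up to scaling, the coefficient vector $(a,b,c)$ of a linear form is determined by two linearly independent solutions via a ``cross product.'' Concretely, since $(x_1,y_1,z_1)$ and $(x_2,y_2,z_2)$ are $K$-linearly independent and both lie in the kernel of the linear form with coefficients $(a,b,c)$, there exists $\lambda \in K^\ast$ such that
\[
(a,b,c) = \lambda \bigl(y_1 z_2 - y_2 z_1,\ z_1 x_2 - z_2 x_1,\ x_1 y_2 - x_2 y_1\bigr).
\]
By part 1) of Lemma \ref{lHeight}, the homogeneous height is invariant under multiplication by $\lambda$, so it suffices to bound the height of the cross-product vector.

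Next I would argue valuation-by-valuation. Fix $v \in M_K$; denote the three components of the cross product by $e_1, e_2, e_3$. Each $e_j$ is a $2 \times 2$ determinant in coordinates of the two vectors, so by the ultrametric inequality
\[
v(e_j) \;\geq\; \min_{i_1} v(x_{1,i_1}) + \min_{i_2} v(x_{2,i_2}),
\]
simply because every monomial appearing in $e_j$ has valuation at least the right-hand side. Taking the minimum over $j$ preserves the inequality, and then multiplying by $\deg v$ and summing over $v \in M_K$ gives
\[
-\sum_{v \in M_K} \min_j v(e_j)\,\deg v \;\leq\; H_K^{\mathrm{hom}}(x_1,y_1,z_1) + H_K^{\mathrm{hom}}(x_2,y_2,z_2),
\]
which is exactly the claim, once we recall the invariance $H_K^{\mathrm{hom}}(a,b,c) = H_K^{\mathrm{hom}}(e_1,e_2,e_3)$.

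I expect no real obstacle: the argument is the standard ``height of a wedge'' estimate, and the only characteristic-$p$ ingredient is that the sum formula and the ultrametric inequality go through verbatim for the valuations in $M_K$. The only point requiring a word of justification is the existence of the scalar $\lambda$, i.e.\ that the cross product is not identically zero; this follows from the $K$-linear independence of the two solution vectors, which guarantees that at least one of the $2 \times 2$ minors $e_j$ is non-zero.
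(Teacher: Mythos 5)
Your proposal is correct and follows essentially the same route as the paper: identify $(a,b,c)$ up to a scalar with the cross product of the two solution vectors, invoke scaling-invariance of $H_K^{\text{hom}}$, and bound each $2\times 2$ minor valuation-by-valuation via the ultrametric inequality before summing over $M_K$. Your closing remark that linear independence guarantees the cross product is non-zero is the one justification the paper leaves implicit, so nothing is missing.
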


\begin{proof}
The vector $(a, b, c)$ is $K$-proportional to the vector $(y_1z_2 - y_2z_1, z_1x_2 - x_1z_2, x_1y_2 - x_2y_1)$. So we have 
\begin{align*}
H_K^{\text{hom}}(a, b, c) &= H_K^{\text{hom}}(y_1z_2 - y_2z_1,z_1x_2 - x_1z_2,x_1y_2 - x_2y_1) \\
&=\sum_{v \in M_K} -\text{min}(v(y_1z_2 - y_2z_1),v(z_1x_2 - x_1z_2),v(x_1y_2 - x_2y_1)) \deg v \\
&\leq \sum_{v \in M_K} -\text{min}(v(y_1),v(z_1),v(x_1)) \deg v + \sum_{v \in M_K} -\text{min}(v(z_2),v(x_2),v(y_2)) \deg v \\
&= H_K^{\text{hom}}(x_1, y_1, z_1) + H_K^{\text{hom}}(x_2, y_2, z_2),
\end{align*}
which was the claimed inequality. 
\end{proof}

We apply Lemma \ref{exterior product} to the unit equation.

\begin{lemma} 
\label{difference between solutions}
Let $u = (u_1, u_2), v = (v_1, v_2)$ be two solutions of (\ref{eXY}) with $u \neq v$. Then we have $H_K(u) \leq H_K(vu^{-1})$.
\end{lemma}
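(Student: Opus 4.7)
My plan is to apply Lemma \ref{exterior product} with the choice $(a,b,c)=(1,u_1,u_2)$, for which $H_K^{\text{hom}}(a,b,c)$ equals $H_K(u)$ by the definition of $H_K$. It then suffices to exhibit two $K$-linearly independent vectors $(x_i,y_i,z_i)$ satisfying $x_i+u_1y_i+u_2z_i=0$ whose homogeneous heights together bound $H_K(vu^{-1})$ from above. The two defining relations $u_1+u_2=1$ and $v_1+v_2=1$ naturally furnish such vectors once rewritten against the coefficients $(1,u_1,u_2)$.

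First, I would take $(x_1,y_1,z_1)=(1,-1,-1)$. The relation $u_1+u_2=1$ gives $1+u_1(-1)+u_2(-1)=0$, so it lies in the required kernel. Moreover this vector lies in $\mathbb{F}_q^3\setminus\{0\}$ and therefore represents a point of $\mathbb{P}^2(\mathbb{F}_q)$, so part 2) of Lemma \ref{lHeight} gives $H_K^{\text{hom}}(1,-1,-1)=0$. Second, I would take $(x_2,y_2,z_2)=(1,-v_1/u_1,-v_2/u_2)$; rewriting $1=v_1+v_2$ as $1=u_1(v_1/u_1)+u_2(v_2/u_2)$ shows this vector also lies in the kernel. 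Linear independence of the two vectors follows immediately from the hypothesis $u\neq v$, since proportionality would force $v_1/u_1=1=v_2/u_2$.

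Feeding this data into Lemma \ref{exterior product} gives
\[
H_K(u)=H_K^{\text{hom}}(1,u_1,u_2)\leq H_K^{\text{hom}}(1,-1,-1)+H_K^{\text{hom}}(1,-v_1/u_1,-v_2/u_2)=H_K^{\text{hom}}(1,-v_1/u_1,-v_2/u_2),
\]
since the first summand vanishes. To finish, I would observe that changing signs of coordinates does not affect the homogeneous height, because every normalized discrete valuation $v\in M_K$ satisfies $v(-1)=0$, so $\min_i v(y_i)$ is preserved under $y_i\mapsto -y_i$. Hence the right-hand side equals $H_K^{\text{hom}}(1,v_1/u_1,v_2/u_2)=H_K(vu^{-1})$, and the claimed inequality follows.

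I do not anticipate any real obstacle here. The only step that requires a small creative choice is finding the two kernel vectors, but once one recognizes that the two solutions $u$ and $v$ of $x+y=1$ each contribute a natural dependence relation among $(1,u_1,u_2)$, the rest is a direct unpacking of Lemma \ref{exterior product} together with the basic properties of $H_K^{\text{hom}}$ already recorded in Lemma \ref{lHeight}.
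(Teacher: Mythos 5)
Your proof is correct and is essentially the paper's own argument: the paper applies Lemma \ref{exterior product} with $(a,b,c)=(u_1,u_2,-1)$ and the kernel vectors $(1,1,1)$ and $(v_1u_1^{-1},v_2u_2^{-1},1)$, which is your choice up to a reordering of coordinates and harmless sign changes. Nothing further is needed.
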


\begin{proof}
Apply Lemma \ref{exterior product} with $(a, b, c) = (u_1, u_2, -1)$, $(x_1, y_1, z_1) = (1, 1, 1)$, $(x_2, y_2, z_2) = (v_1u_1^{-1}, v_2u_2^{-1}, 1)$ and use the fact that $H_K^{\text{hom}}(1,1,1) = 0$.
\end{proof}

The next Lemma takes advantage of the properties of $W_N(X, Y)$ listed in Lemma \ref{magic polynomials} and the non-vanishing of $c_N$ modulo $p$ obtained in Corollary \ref{range of safeness}.

\begin{lemma} 
\label{difference between solutions and multiples of solutions}
Let $u, v$ be as in Lemma \ref{difference between solutions}. Let $N < \frac{p}{3}-2$. Then there exists $M \in \{N, N+1\}$ such that $H_K(u) \leq \frac{1}{M + 1} H_K(vu^{-2M - 1})$.
\end{lemma}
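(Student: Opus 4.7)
The plan is to adapt the Padé-approximant construction of Evertse--Győry. For $M \in \{N, N+1\}$, I introduce the auxiliary vectors
\[
\xi_M := \bigl(u_1^{-1} v_1^{2M+1} W_M(v_2, -1),\ u_2^{-1} v_2^{2M+1} W_M(-1, v_1),\ W_M(v_1, v_2)\bigr) \in K^3.
\]
Applying Lemma~\ref{magic polynomials} part 2) to $(X, Y, Z) = (v_1, v_2, -1)$ (which satisfies $X + Y + Z = 0$ since $v_1 + v_2 = 1$) and dividing the first two terms by $u_1, u_2$, one gets $u_1 \xi_{M,1} + u_2 \xi_{M,2} - \xi_{M,3} = 0$. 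Hence both $\xi_N$ and $\xi_{N+1}$ lie in the two-dimensional kernel of the linear form $u_1 X + u_2 Y - Z$.

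I would next check that $\xi_N, \xi_{N+1}$ are $K$-linearly independent. The $2 \times 2$ minor formed from their second and third columns, after clearing a factor of $u_2^{-1}$, is identified with the determinant of Lemma~\ref{magic polynomials} part 3) evaluated at $(X, Y, Z) = (v_1, v_2, -1)$, and thus equals $\pm u_2^{-1} c_N (v_1 v_2)^{2N+1}(v_1^2 + v_1 v_2 + v_2^2)$. This is non-zero: $c_N \not\equiv 0 \pmod p$ by Corollary~\ref{range of safeness} (since $N < p/3 - 2$), $v_1 v_2 \neq 0$ because $v \in G$, and generically $v_1^2 + v_1 v_2 + v_2^2 \neq 0$ (the exceptional locus where $v_1/v_2$ is a primitive sixth root of unity is finite and must be treated separately). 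Lemma~\ref{exterior product} applied with $(a, b, c) = (u_1, u_2, -1)$ then yields
\[
H_K(u) \leq H_K^{\text{hom}}(\xi_N) + H_K^{\text{hom}}(\xi_{N+1}).
\]

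The technical heart, and the main obstacle, is to prove the per-vector estimate
\[
H_K^{\text{hom}}(\xi_M) \leq \tfrac{1}{2(M+1)}\, H_K(v u^{-(2M+1)}) \quad \text{for each } M \in \{N, N+1\}.
\]
I would establish this by a careful valuation-by-valuation analysis. For each $w \in M_K$, using non-archimedeanness and the triviality of $w$ on $\mathbb{F}_q^*$ (so the integer coefficients of $W_M$ have $w$-valuation $\geq 0$), one bounds $w(W_M(v_2, -1)), w(W_M(-1, v_1)), w(W_M(v_1, v_2))$ in terms of $w(v_1), w(v_2)$, and then compares $-\min_i w(\xi_{M,i})$ to $-\min\bigl(0,\ w(v_1) - (2M+1)w(u_1),\ w(v_2) - (2M+1)w(u_2)\bigr)$. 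The constraints $u_1 + u_2 = v_1 + v_2 = 1$, which force $\min(w(u_1), w(u_2)) \leq 0$ and similarly for $v$, substantially restrict the possible sign configurations of the valuations, and the sum formula $\sum_w w(x)\deg w = 0$ allows one to offset loose local estimates globally. This case analysis is delicate because the constant $\tfrac{1}{2(M+1)}$ is tight for the concluding pigeonhole.

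Finally, pigeonhole closes the argument: supposing for contradiction that both desired inequalities failed, one has $H_K(v u^{-(2M+1)}) < (M+1) H_K(u)$ for $M = N, N+1$, whence $H_K(u) < \tfrac{1}{2} H_K(u) + \tfrac{1}{2} H_K(u) = H_K(u)$, a contradiction when $H_K(u) > 0$; the case $H_K(u) = 0$ is trivial since $u \in \mathbb{F}_q^* \times \mathbb{F}_q^*$ makes the claim vacuous.
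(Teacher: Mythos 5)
Your construction is internally consistent up to the inequality $H_K(u) \le H_K^{\text{hom}}(\xi_N) + H_K^{\text{hom}}(\xi_{N+1})$, but the step you yourself identify as the technical heart --- the per-vector estimate $H_K^{\text{hom}}(\xi_M) \le \frac{1}{2(M+1)} H_K(vu^{-(2M+1)})$ --- is not merely unproved: it is false, and no valuation-by-valuation analysis can rescue it. The coordinates of $\xi_M$ are (up to the factors $u_1^{-1}, u_2^{-1}$) forms of degree $3M+1$ in $v_1, v_2$, so $H_K^{\text{hom}}(\xi_M)$ grows like a multiple of $M\cdot H_K(v)$, while the right-hand side is at most roughly $\frac{1}{2(M+1)}\left(H_K(v) + (2M+1)(H_K^{\text{hom}}(1,u_1^{-1})+H_K^{\text{hom}}(1,u_2^{-1}))\right)$, which stays of size about $H_K(u) + H_K(v)/(2M)$. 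Concretely, take $K = \mathbb{F}_p(t)$, $u = (t, 1-t)$, $v = u^{p^k} = (t^{p^k}, (1-t)^{p^k})$ with $k \ge 1$: all three coordinates of $\xi_M$ are polynomials in $t$, the first has degree $(3M+1)p^k - 1$ and the third, $W_M(t^{p^k},(1-t)^{p^k})$, has degree at most $Mp^k$, so their gcd has degree at most $Mp^k$ and hence $H_K^{\text{hom}}(\xi_M) \ge (2M+1)p^k - 1$; meanwhile $\frac{1}{2(M+1)}H_K(vu^{-2M-1}) = \frac{p^k - 2M - 1}{2(M+1)}$. The root of the problem is that you built the Pad\'e vectors out of $v$ and paired them against the linear form $(u_1, u_2, -1)$, whose homogeneous height is only $H_K(u)$: in that configuration Lemma \ref{exterior product} has nothing to give.

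The paper's proof reverses the roles of $u$ and $v$. Its two kernel vectors are $(v_1u_1^{-2M-1}, v_2u_2^{-2M-1}, -1)$ and $(W_M(u_2,-1), W_M(-1,u_1), -W_M(u_1,u_2))$ --- note that $W_M$ is evaluated at $u$, not $v$ --- and both are annihilated by the form $(u_1^{2M+1}, u_2^{2M+1}, 1)$, whose homogeneous height is $(2M+1)H_K(u)$. Lemma \ref{exterior product} then gives $(2M+1)H_K(u) \le H_K(vu^{-2M-1}) + H_K^{\text{hom}}(w)$, and the trivial ultrametric bound $H_K^{\text{hom}}(w) \le M\, H_K(u)$ (the $W$-vector has degree only $M$ in $u_1,u_2$) already yields $(M+1)H_K(u) \le H_K(vu^{-2M-1})$; no delicate local analysis is required. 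The pigeonhole over $M \in \{N, N+1\}$ also occurs earlier: part 3) of Lemma \ref{magic polynomials} together with $c_N \not\equiv 0 \bmod p$ shows the two $W$-vectors are independent of each other, so $(v_1, v_2, -1)$ can be proportional to at most one of them, and one works with an $M$ for which independence holds. Your worry about $v_1^2 + v_1v_2 + v_2^2 = 0$ correspondingly becomes $u_1^2 + u_1u_2 + u_2^2 = 1 - u_1u_2 = 0$, which forces $u_1$ to be algebraic over $\mathbb{F}_p$, a case disposed of at the outset since then $H_K(u) = 0$ and the lemma is vacuous.
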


\begin{proof}
The proof is almost the same as in Lemma 6.4.5 in \cite{EG}, with only few necessary modifications. For completeness we give the full proof.

If $u_1$, and thus both $u_1$ and $u_2$ are roots of unity, we have that $H_K(u)=0$ so the lemma is trivially true. By Lemma \ref{magic polynomials} part 2) we get that for $M \in \{N, N + 1\}$ the following holds:
\[
{u_1}^{2M + 1} W_{M}(u_2,-1) + {u_2}^{2M + 1} W_M(-1, u_1) - W_M(u_1, u_2) = 0
\]
as well as 
\[
{u_1}^{2M + 1}(v_1{u_1}^{-2M - 1}) + {u_2}^{2M + 1}(v_2{u_2}^{-2M - 1}) - 1 = 0.
\]
Now we claim that there is $M \in \{N,N+1\}$ such that the vectors
\begin{align}
\label{eLin}
(v_1, v_2, -1) \text{ and } ({u_1}^{2M + 1} W_{M}(u_2,-1), {u_2}^{2M+1} W_M(-1, u_1), -W_M(u_1, u_2))
\end{align}
are linearly independent. Clearly, to prove the claim it is enough to prove that the two vectors 
\[
({u_1}^{2M + 1} W_{M}(u_2, -1), {u_2}^{2M + 1} W_M(-1, u_1), -W_M(u_1, u_2)) \quad (M \in \{N, N + 1\})
\]
are linearly independent. But we know that for $M \in \{N, N + 1\}$ we have that $c_M \not \equiv 0 \ \text{mod} \ p$ by Corollary \ref{range of safeness} and the assumption that $N < \frac{p}{3}-2$. Furthermore, $u_1$ and $u_2$ are not algebraic over $\mathbb{F}_p$. Thus the identity Lemma \ref{magic polynomials} part 3) gives us the non-vanishing of the first $2 \times 2$ minor, which proves the claimed independence. So by applying to (\ref{eLin}) the diagonal transformation of dividing the first coordinate by ${u_1}^{2M + 1}$ and the second by ${u_2}^{2M+1}$, we deduce that the two vectors
\[
(v_1{u_1}^{-2M - 1}, v_2{u_2}^{-2M - 1}, -1)
\]
and
\[
(W_{M}(u_2, -1), W_M(-1, u_1), -W_M(u_1, u_2)) =: (w_1, w_2, w_3)
\]
are linearly independent. So by Lemma \ref{difference between solutions} we get that 
\[
(2M + 1)H_K(u) \leq H_K(vu^{-2M - 1}) + H_K^{\text{hom}}(w_1, w_2, w_3)
\]
But now the inequality
\[
H_K^{\text{hom}}(w_1, w_2, w_3) \leq M \cdot H_K(u)
\]
follows immediately from the non-archimedean triangle inequality. So we indeed get
\[
(M + 1) H_K(u) \leq H_K(vu^{-2M - 1}).
\]
This ends the proof.
\end{proof}

Define
\[
\text{Sol}(G) := \{(u_1,u_2) \in G \setminus G^{\text{tors}} : u_1 + u_2 = 1\}
\]
and
\[
\text{Prim-Sol}(G) := \{(u_1,u_2) \in G \setminus G^p : u_1 + u_2 = 1\}.
\] 
It is easily seen that $\text{Prim-Sol}(G) \subseteq \text{Sol}(G)$. Finally define
\[
S := \{v \in M_K : \text{ there is } g \in G \text{ with } v(g) \neq 0\}.
\]
Note that $S$ is a finite set and that one has an homomorphism $\varphi: G \to \mathbb{Z}^{|S|} \times {\mathbb{Z}}^{|S|} \subseteq \mathbb{R}^{|S|} \times {\mathbb{R}}^{|S|}$ defined by sending $(g_1, g_2) \in G$ to $(v(g_1) \deg v, v(g_2) \deg v)_{v \in S}$. 

Let $u, v \in \text{Sol}(G)$ be such that $\varphi(u) = \varphi(v)$. Suppose that $u \neq v$. Then Lemma \ref{difference between solutions} implies that $H_K(u) \leq 0$. Hence by Lemma \ref{lHeight} part 2) it follows that $u$ and thus $v$ are in $ G^{\text{tors}}$. This implies that the restriction of $\varphi$ to $\text{Sol}(G)$ is injective. In particular the restriction of $\varphi$ to $\text{Prim-Sol}(G)$ is injective. We now call $\mathcal{S} := \varphi(\text{Sol}(G))$ and $\mathcal{PS} := \varphi(\text{Prim-Sol}(G))$. It suffices to bound the cardinality of $\mathcal{PS}$.

Let $||\cdot||$ be the norm on $\mathbb{R}^{|S|} \times \mathbb{R}^{|S|}$ that is the average of the $||\cdot||_{1}$ norms on $\mathbb{R}^{|S|}$. More precisely, we define for $x=(x_1,x_2) \in \mathbb{R}^{|S|} \times \mathbb{R}^{|S|}$ 
\[
||x|| = \frac{1}{2} (||x_1|| + ||x_2||).
\]
We now state the most important properties of $\mathcal{S}$.

\begin{lemma}
\label{the sufficient properties to win}
The set $\mathcal{S} \subseteq \mathbb{Z}^{|S|} \times \mathbb{Z}^{|S|}$ has the following properties: \\
1) For any two distinct $u,v \in \mathcal{S}$, we have that $||u|| \leq 2||v - u||$. \\
2) For any two distinct $u,v \in \mathcal{S}$ and any positive integer $N$ such that $N<\frac{p}{3}-2$, there is $M \in \{N, N + 1\}$ such that $||u|| \leq \frac{2}{M + 1} ||v - (2M + 1)u||$.  \\
3) $p\mathcal{S} \subseteq \mathcal{S}$.
\end{lemma}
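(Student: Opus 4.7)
The plan is to translate all three statements into height inequalities, using an explicit dictionary between the norm $||\varphi(g)||$ and the heights on $K$. The key identity is that for every $g = (g_1, g_2) \in G$,
\[
||\varphi(g)|| = H_K(g_1) + H_K(g_2),
\]
which falls out of the sum formula: splitting $\sum_{v \in M_K} v(g_i) \deg v = 0$ into positive and negative contributions gives $\sum_{v \in M_K} |v(g_i)| \deg v = 2 H_K(g_i)$, and $v(g_i) = 0$ for $v \notin S$ by the definition of $S$. The elementary inequality $\min(0,a) + \min(0,b) \leq \min(0,a,b)$ then yields $H_K(g_1) + H_K(g_2) \geq H_K(g_1, g_2)$, so in particular $||\varphi(g)|| \geq H_K(g_1, g_2)$.

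I would next sharpen this to the equality $||\varphi(u)|| = 2 H_K(u)$ whenever $u = (u_1, u_2) \in \text{Sol}(G)$. The equation $u_1 + u_2 = 1$ combined with the strong triangle inequality and $v(1) = 0$ restricts the behavior at each $v \in M_K$ to four cases: $v(u_1) = v(u_2) = 0$; $v(u_1) = v(u_2) < 0$; $v(u_1) = 0 < v(u_2)$; or $v(u_1) > 0 = v(u_2)$. Reading off contributions and applying the sum formula separately to $u_1$ and $u_2$ gives $H_K(u_1) = H_K(u_2) = H_K(u_1, u_2) =: H_K(u)$, and hence $||\varphi(u)|| = 2 H_K(u)$. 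This case analysis is the only step requiring genuine work; the rest of the proof is a direct translation.

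Parts 1 and 2 now follow immediately from Lemmas \ref{difference between solutions} and \ref{difference between solutions and multiples of solutions}. Since $\varphi$ is a homomorphism, for distinct $u, v \in \text{Sol}(G)$ one has
\[
||\varphi(v) - \varphi(u)|| = ||\varphi(vu^{-1})|| \geq H_K(vu^{-1}) \geq H_K(u) = \tfrac{1}{2} ||\varphi(u)||,
\]
proving part 1; picking $M \in \{N, N+1\}$ as in Lemma \ref{difference between solutions and multiples of solutions} likewise gives $||\varphi(v) - (2M+1)\varphi(u)|| = ||\varphi(vu^{-2M-1})|| \geq (M+1) H_K(u) = \tfrac{M+1}{2} ||\varphi(u)||$, proving part 2. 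Finally, part 3 is a short Frobenius argument: if $u \in \text{Sol}(G)$, then $u^p = (u_1^p, u_2^p) \in G$ satisfies $u_1^p + u_2^p = (u_1 + u_2)^p = 1$ in characteristic $p$, and $u^p \notin G^{\text{tors}}$ since otherwise $u$ itself would be torsion (as $G^{\text{tors}}$ is finite). Hence $u^p \in \text{Sol}(G)$ and $\varphi(u^p) = p\,\varphi(u)$, so $p\mathcal{S} \subseteq \mathcal{S}$.
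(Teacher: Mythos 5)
Your proof is correct and follows essentially the same route as the paper: translate the norm $||\varphi(\cdot)||$ into heights via the sum formula, then invoke Lemma \ref{difference between solutions} for part 1), Lemma \ref{difference between solutions and multiples of solutions} for part 2), and the Frobenius map for part 3). The only (cosmetic) difference is that you establish the exact equality $||\varphi(u)|| = 2H_K(u)$ for solutions by a place-by-place case analysis, whereas the paper makes do with the two-sided comparison $H_K^{\text{hom}}(x_1, x_2) \leq ||\varphi(x)|| \leq 2H_K^{\text{hom}}(x_1, x_2)$ valid for all $x$; both give the same constants.
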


\begin{proof}
Let $x = (x_1, x_2) \in G$. By construction we have
\[
||\varphi(x)|| = H_K^{\text{hom}}(1, x_1) + H_K^{\text{hom}}(1, x_2).
\]
Note the basic inequalities
\[
H_K^{\text{hom}}(x_1, x_2) \leq H_K^{\text{hom}}(1, x_1) + H_K^{\text{hom}}(1, x_2) \leq 2H_K^{\text{hom}}(x_1, x_2).
\]
It is now clear that Lemma \ref{difference between solutions} implies part 1) and Lemma \ref{difference between solutions and multiples of solutions} implies part 2). Finally, part 3) is due to the action of the Frobenius operator.
\end{proof}


Denote by $V$ the real span of $\varphi(G)$. Then $V$ is an $r$-dimensional vector space over $\mathbb{R}$. We will keep writing $||\cdot||$ for the restriction of $||\cdot||$ to $V$. We have the following lemma.

\begin{lemma}
Given a positive real number $\theta$, one can find a set $\mathcal{E} \subseteq \{x \in V: ||x||=1\}$ satisfying \\
1) $|\mathcal{E}| \leq (1 + \frac{2}{\theta})^r$, \\
2) for all $0 \neq u \in V$ there exists $e \in \mathcal{E}$ satisfying $||\frac{u}{||u||} - e|| \leq \theta$.
\end{lemma}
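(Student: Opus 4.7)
The plan is to construct $\mathcal{E}$ as a maximal $\theta$-separated subset of the unit sphere $S := \{x \in V : ||x|| = 1\}$, and then use a standard volume-packing argument to bound $|\mathcal{E}|$. Since $V$ is an $r$-dimensional real vector space and $||\cdot||$ is a norm on it, fixing any linear isomorphism $V \cong \mathbb{R}^r$ and pulling back Lebesgue measure yields a translation-invariant Borel measure $\mu$ on $V$ such that $\mu(B(x,\rho)) = \rho^r \mu(B(0,1))$ for every closed ball $B(x,\rho) := \{y \in V : ||y - x|| \leq \rho\}$, and such that $0 < \mu(B(0,1)) < \infty$.

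First I would choose, by a greedy construction or Zorn's lemma, a subset $\mathcal{E} \subseteq S$ that is maximal with respect to the property $||e - e'|| \geq \theta$ for all distinct $e, e' \in \mathcal{E}$. For the bound in part (1), observe that the open balls of radius $\theta/2$ centered at the points of $\mathcal{E}$ are pairwise disjoint: if some $x$ lay in two of them then the triangle inequality would force $||e - e'|| < \theta$. These open balls are all contained in $B(0, 1 + \theta/2)$, again by the triangle inequality combined with $||e|| = 1$. Comparing measures gives
\[
|\mathcal{E}| \cdot (\theta/2)^r \mu(B(0,1)) \leq (1 + \theta/2)^r \mu(B(0,1)),
\]
so $|\mathcal{E}| \leq (1 + 2/\theta)^r$, which in particular shows $\mathcal{E}$ is finite.

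For part (2), I would invoke maximality of $\mathcal{E}$. Take any $0 \neq u \in V$ and set $x := u/||u|| \in S$. If $x \in \mathcal{E}$ the conclusion is trivial with $e := x$. Otherwise, were $||x - e|| \geq \theta$ to hold for every $e \in \mathcal{E}$, then $\mathcal{E} \cup \{x\}$ would still be $\theta$-separated, contradicting the maximality of $\mathcal{E}$. Hence some $e \in \mathcal{E}$ satisfies $||x - e|| < \theta$, giving property (2).

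There is no serious obstacle here; this is the classical covering-number estimate for finite-dimensional normed spaces. The only points that require care are verifying that the open balls of radius $\theta/2$ centered at the points of $\mathcal{E}$ really are pairwise disjoint and are contained in $B(0, 1+\theta/2)$, and that the measure $\mu$ on $V$ scales as $\mu(B(x, \rho)) = \rho^r \mu(B(0,1))$ so that the packing inequality yields exactly the constant $(1 + 2/\theta)^r$.
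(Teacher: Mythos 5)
Your proof is correct and complete. The paper itself gives no argument here, simply citing Lemma 6.3.4 of Evertse--Gy\H{o}ry, and your maximal $\theta$-separated set plus volume-packing argument is precisely the standard proof of that covering estimate, yielding exactly the constant $(1+\frac{2}{\theta})^r$.
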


\begin{proof}
See Lemma 6.3.4 in \cite{EG}.
\end{proof}

Let $\theta \in (0,\frac{1}{9})$ be a parameter and fix a corresponding choice of a set $\mathcal{E}$ satisfying the above properties. Given $e \in \mathcal{E}$, we define 
\[
\mathcal{S}_{e} := \left\{x \in \mathcal{S} : \left|\left|\frac{x}{||x||}-e\right|\right| \leq \theta \right\}, \ \mathcal{PS}_e:={\mathcal{S}}_e \cap \mathcal{PS}. 
\]
Fix $e \in \mathcal{E}$. We proceed to bound $|{\mathcal{PS}}_e|$. We start by deducing a so-called gap principle from part 1) of Lemma \ref{the sufficient properties to win}.

\begin{lemma}
\label{gap principle}
Let $u_1,u_2$ be distinct elements of $S_{e}$, with $||u_2|| \geq ||u_1||$. Then $||u_2|| \geq \frac{3 - \theta}{2 + \theta}||u_1||$.
\end{lemma}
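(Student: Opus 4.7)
The plan is to read the gap off directly from property (1) of Lemma~\ref{the sufficient properties to win}, namely $\|u_1\| \leq 2\|u_2 - u_1\|$, by exploiting the fact that $u_1$ and $u_2$ both lie in a very thin cone around the common unit direction $e$.

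First I would normalize: set $\hat u_i := u_i / \|u_i\|$ and $\epsilon_i := \hat u_i - e$. By the definition of $\mathcal{S}_e$ we have $\|\epsilon_i\| \leq \theta$ for $i = 1, 2$, and the choice of $e \in \mathcal{E}$ gives $\|e\| = 1$. The point of making $e$ the reference vector (rather than, say, $\hat u_1$) is that each $u_i$ then contributes only a single factor of $\theta$ to the slack, not $2\theta$.

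Next I would write
\[
u_2 - u_1 = (\|u_2\| - \|u_1\|) e + \|u_2\|\epsilon_2 - \|u_1\|\epsilon_1
\]
and apply the triangle inequality. Since $\|u_2\| \geq \|u_1\|$, the first summand has norm $\|u_2\| - \|u_1\|$, and the remaining pieces are bounded by $\theta \|u_2\|$ and $\theta \|u_1\|$, so
\[
\|u_2 - u_1\| \;\leq\; (\|u_2\| - \|u_1\|) + \theta\bigl(\|u_1\| + \|u_2\|\bigr).
\]

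Finally I would substitute this into the inequality $\|u_1\| \leq 2 \|u_2 - u_1\|$ furnished by property (1) of Lemma~\ref{the sufficient properties to win}. This gives a purely linear inequality between $\|u_1\|$ and $\|u_2\|$, which on rearrangement isolates $\|u_2\|$ on one side and yields exactly the stated bound $\|u_2\| \geq \frac{3 - \theta}{2 + \theta}\|u_1\|$ after collecting the $\theta$-terms. There is no real obstacle here: the argument is just property (1) plus the triangle inequality, and the only delicate point is choosing the decomposition around $e$ (rather than around one of the $\hat u_i$) so that the constants come out tight enough to obtain the $\frac{3-\theta}{2+\theta}$ ratio needed downstream to feed into the $19^{r}$ in the main theorem.
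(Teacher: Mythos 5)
Your argument is essentially the paper's own proof: the identical decomposition $u_i = ||u_i||\,e + ||u_i||\epsilon_i$ with $||\epsilon_i|| \leq \theta$, followed by property (1) of Lemma \ref{the sufficient properties to win} and the triangle inequality. One caveat about the final step, which you leave implicit: substituting $||u_2 - u_1|| \leq (||u_2|| - ||u_1||) + \theta(||u_1|| + ||u_2||)$ into $||u_1|| \leq 2\,||u_2 - u_1||$ doubles the $\theta$-terms as well, so the rearrangement literally yields $||u_2|| \geq \frac{3 - 2\theta}{2 + 2\theta}||u_1||$, not $\frac{3 - \theta}{2 + \theta}||u_1||$. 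The paper's own display, $\lambda_1 \leq 2(\lambda_2 - \lambda_1) + \theta(\lambda_2 + \lambda_1)$, contains the same unexplained loss of a factor $2$ on the $\theta$-term, so you have reproduced the intended argument faithfully; but the constant as stated does not follow from the computation you describe. The weaker ratio $\frac{3 - 2\theta}{2 + 2\theta}$ is still $> 1$ for $\theta \in (0, \frac{1}{9})$, so the downstream counting goes through, only with a somewhat larger final constant than $31 \cdot 19^{r}$.
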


\begin{proof}
Write $\lambda_i:=||u_i||$ for $i=1,2$. Then we have $u_i = \lambda_i e + u'_i$ where $||u'_i|| \leq \theta \lambda_i$, by definition of ${\mathcal{S}}_e$. Part 1) of Lemma \ref{the sufficient properties to win} gives
\[
\lambda_1 \leq 2||(\lambda_2 - \lambda_1)e + (u'_2 - u'_1)|| \leq 2(\lambda_2 - \lambda_1) + \theta(\lambda_2 + \lambda_1),
\]
and after dividing by $\lambda_1$ we get that 
\[
1 \leq  2\left(\frac{\lambda_2}{\lambda_1} - 1\right) + \theta \left(\frac{\lambda_2}{\lambda_1} + 1\right).
\]
This can be rewritten as $\frac{3 - \theta}{2 + \theta} \leq \frac{\lambda_2}{\lambda_1}$.
\end{proof}

From part 2) of Lemma \ref{the sufficient properties to win} we can deduce the following crucial Lemma.

\begin{lemma} 
\label{anti-gap principle}
Let $u_1,u_2$ be distinct elements of ${\mathcal{S}}_{e}$. Suppose that $\frac{||u_2||}{||u_1||}<\frac{2}{3}p-3$. Then $\frac{||u_2||}{||u_1||} \leq \frac{10}{\theta}$.
\end{lemma}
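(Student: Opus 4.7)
The plan is to apply part 2) of Lemma \ref{the sufficient properties to win} with an optimally chosen $N$ and then pay for the deviation of $u_1, u_2$ from the direction $e$. Set $\lambda_i := ||u_i||$ and $t := \lambda_2/\lambda_1$, and decompose $u_i = \lambda_i e + u'_i$ with $||u'_i|| \leq \theta \lambda_i$, exactly as in the proof of Lemma \ref{gap principle}. If $t < 3$, the conclusion is immediate, because the standing assumption $\theta < 1/9$ already forces $10/\theta > 90$. So I may assume $t \geq 3$ and take $N := \lfloor (t-1)/2 \rfloor \geq 1$, giving $2N + 1 \leq t < 2N + 3$. The hypothesis $t < \frac{2}{3}p - 3$ yields $N \leq (t - 1)/2 < \frac{p}{3} - 2$, so $N$ lies in the admissible range of part 2).

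Part 2) then produces some $M \in \{N, N+1\}$ with $\lambda_1 \leq \frac{2}{M+1} ||u_2 - (2M+1)u_1||$. Substituting $u_i = \lambda_i e + u'_i$ and using the triangle inequality (together with $||e|| = 1$), the right-hand side is bounded by
\[
\frac{2}{M+1}\Bigl(|t - (2M+1)|\,\lambda_1 + \theta\lambda_2 + (2M+1)\theta\lambda_1\Bigr).
\]
By the definition of $N$, both candidates for $M$ satisfy $|t - (2M+1)| \leq 2$ and $2M+1 \leq t + 2$. Dividing the resulting inequality through by $\lambda_1$ and substituting these two estimates gives the upper bound $M + 1 \leq 4 + 4\theta(t+1)$.

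To finish I would combine this with the lower bound $M + 1 \geq N + 1 > (t-1)/2$, which follows from the choice of $N$. This produces $(t-1)/2 < 4 + 4\theta(t+1)$, i.e.\ $t(1 - 8\theta) < 9 + 8\theta$. For $\theta \in (0, 1/9)$ a short check (equivalent to verifying the quadratic inequality $8\theta^2 + 89\theta - 10 < 0$ on that interval) shows $\frac{9 + 8\theta}{1 - 8\theta} < \frac{10}{\theta}$, which is the claimed bound.

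The only delicate point is the choice of $N$: I need $2M + 1$ to approximate $t$ to within a bounded error, for otherwise the gain $\frac{1}{M+1}$ supplied by Lemma \ref{the sufficient properties to win} is swamped by the residue $|\lambda_2 - (2M+1)\lambda_1|$. The hypothesis $t < \frac{2}{3}p - 3$ is exactly what guarantees that a good $N$ with $2N + 1 \approx t$ still fits within the range $N < \frac{p}{3} - 2$ imposed by part 2) of that lemma.
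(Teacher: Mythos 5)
Your proof is correct and follows essentially the same route as the paper: the same choice of $N$ with $2N+1 \leq \lambda_2/\lambda_1 < 2N+3$, the same appeal to part 2) of Lemma \ref{the sufficient properties to win}, and the same triangle-inequality estimate after decomposing along $e$. The only difference is that you run the estimate directly (combining $M+1 \leq 4+4\theta(t+1)$ with $M+1 > (t-1)/2$) instead of arguing by contradiction from $t \geq 10/\theta$, which is a harmless — arguably cleaner — rearrangement of the same computation.
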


\begin{proof}
We follow the proof of Lemma 6.4.9 of \cite{EG} part (ii) with a few modifications. For completeness we write out the full proof.

Again denote by $\lambda_i = ||u_i||$ for $i=1, 2$, and by $u'_i = u_i - \lambda_i e$. Assume that $\lambda_2 \geq \frac{10}{\theta}\lambda_1$. Let $N$ be the positive integer with $2N + 1 \leq \frac{\lambda_2}{\lambda_1} < 2N + 3$. Then $2N + 1 < \frac{2}{3}p - 3$ and hence $N<\frac{p}{3}-2$. Applying part 2) of Lemma \ref{the sufficient properties to win} gives an integer $M \in \{N,N+1\}$ satisfying
\[
\lambda_1 \leq \frac{2}{M + 1}||(\lambda_2 - (2M+1)\lambda_1) e + u'_2 - (2M + 1)u'_1||.
\]
Furthermore, we have that
\[
|\lambda_2 - (2M + 1)\lambda_1| \leq 2\lambda_1
\]
and $M > \frac{4}{\theta}$ from the assumption $\lambda_2 \geq \frac{10}{\theta}\lambda_1$. Hence
\begin{align*}
\lambda_1 &\leq \frac{2}{M + 1}||(\lambda_2 - (2M+1)\lambda_1) e + u'_2 - (2M + 1)u'_1|| \leq \frac{2}{M + 1}(2\lambda_1 +\lambda_2\theta + (2M + 1)\lambda_1\theta) \\
&\leq \frac{2}{M + 1}(2 + (4M + 4)\theta)\lambda_1 = \left(\frac{4}{M + 1} + 8\theta\right)\lambda_1 < 9\theta\lambda_1.
\end{align*}
It follows that $\lambda_1<\frac{1}{1-9\theta}$. Now observe that for any non-negative integer $h$ the elements $p^hu_1,p^hu_2$ in ${\mathcal{S}}_e$ satisfy all the assumptions made so far. We conclude that also $p^h\lambda_1<\frac{1}{1-9\theta}$ for every non-negative integer $h$, which implies that $||u_1||=0$. This contradicts the fact that $u_1 \in {\mathcal{S}}_e$, completing the proof.
\end{proof}

Assume without loss of generality that $\mathcal{PS}_{e}$ is not empty, and fix a choice of $u_0 \in \mathcal{PS}_{e}$ with $||u_0||$ minimal. For any $u \in {\mathcal{PS}}_{e}$, denote by $k(u)$ the smallest non-negative integer such that $\frac{||u||}{p^{k(u)} ||u_0||} < p$ and denote $\lambda(u) := \frac{||u||}{p^{k(u)} ||u_0||}$.

We define $\mathcal{PS}_{e}(1) := \{u \in {\mathcal{PS}}_{e} : \lambda(u) \leq \sqrt{p}\}$ and ${\mathcal{PS}}_{e}(2) := \{u \in {\mathcal{PS}}_{e} : \lambda(u) > \sqrt{p}\}$. Since we may assume $p > 7$ by Corollary \ref{boundpr2}, we have $\frac{2p}{3} - 3 > \sqrt{p}$.

\begin{lemma}
\label{gap and anti gap}
1) Let $i \in \{1, 2\}$ and let $u_1, u_2$ be distinct elements of ${\mathcal{PS}}_{e}(i)$ with $\lambda(u_2) \geq \lambda(u_1)$. Then $\lambda(u_2) \geq \frac{3 - \theta}{2+\theta}\lambda(u_1)$ and $\lambda(u_2) \leq \frac{10}{\theta}\lambda(u_1)$. \\
2) $\lambda({\mathcal{PS}}_{e}(2)) \subseteq [\frac{\theta p}{10},p)$. \\
3) $\lambda$ is an injective map on ${\mathcal{PS}}_{e}$.
\end{lemma}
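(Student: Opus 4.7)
The plan is to deduce all three statements by comparing elements of $\mathcal{PS}_e$ to suitable $p$-power multiples inside $\mathcal{S}_e$ and then applying the gap and anti-gap principles (Lemmas \ref{gap principle} and \ref{anti-gap principle}). A preliminary observation is that $\mathcal{S}_e$ is closed under multiplication by $p$: part 3) of Lemma \ref{the sufficient properties to win} gives $p\mathcal{S} \subseteq \mathcal{S}$, and since multiplication by $p$ scales the norm but leaves the direction $x/||x||$ unchanged, it also preserves membership in $\mathcal{S}_e$.

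For part 1), given distinct $u_1, u_2 \in \mathcal{PS}_e(i)$ with $\lambda(u_2) \geq \lambda(u_1)$, I set $K := \max(k(u_1), k(u_2))$ and $v_j := p^{K - k(u_j)} u_j \in \mathcal{S}_e$, so that $||v_j|| = p^K \lambda(u_j) ||u_0||$ and hence $||v_2||/||v_1|| = \lambda(u_2)/\lambda(u_1)$. The key verification is that $v_1 \neq v_2$: denoting by $\tilde u_j \in \text{Prim-Sol}(G)$ the preimage of $u_j$ under $\varphi$, and assuming $k(u_1) \leq k(u_2)$, the identity $v_1 = v_2$ combined with the injectivity of $\varphi$ on $\text{Sol}(G)$ (Frobenius sends solutions to solutions) forces $\tilde u_2 = \tilde u_1^{p^{k(u_2)-k(u_1)}}$ in $G$, contradicting either $u_1 \neq u_2$ (if $k(u_1) = k(u_2)$) or the primitivity $\tilde u_2 \notin G^p$ (if $k(u_2) > k(u_1)$). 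Applying Lemma \ref{gap principle} to $v_1, v_2$ yields $\lambda(u_2) \geq \frac{3-\theta}{2+\theta}\lambda(u_1)$. For the upper bound, the ratio $\lambda(u_2)/\lambda(u_1)$ is in both cases $i \in \{1,2\}$ at most $\sqrt{p}$ (using $\lambda \geq 1$ in case $i=1$, and $\lambda(u_1) > \sqrt{p}$ with $\lambda(u_2) < p$ in case $i=2$); since $p > 7$ guarantees $\sqrt{p} < \frac{2p}{3} - 3$, Lemma \ref{anti-gap principle} applied to $v_1, v_2$ delivers $\lambda(u_2)/\lambda(u_1) \leq 10/\theta$.

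Part 3) follows immediately: two distinct $u_1, u_2 \in \mathcal{PS}_e$ with $\lambda(u_1) = \lambda(u_2)$ must lie in a common $\mathcal{PS}_e(i)$ (the two pieces are separated by $\sqrt{p}$), and part 1) then forces $1 \geq \frac{3-\theta}{2+\theta}$, equivalent to $\theta \geq 1/2$, contradicting $\theta < 1/9$. For part 2), the upper bound $\lambda(u) < p$ is built into the definition of $k(u)$. For the lower bound I argue by contradiction: assume $u \in \mathcal{PS}_e(2)$ with $\lambda(u) < \theta p / 10$, and compare $u$ with $w := p^{k(u)} u_0 \in \mathcal{S}_e$. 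Noting that $\lambda(u_0) = 1$ places $u_0 \in \mathcal{PS}_e(1)$, the same Frobenius/primitivity argument gives $u \neq w$ (otherwise $\tilde u = \tilde u_0^{p^{k(u)}}$, which either yields $u = u_0 \in \mathcal{PS}_e(1)$ contradicting $u \in \mathcal{PS}_e(2)$, or places $\tilde u \in G^p$ contradicting primitivity). For $p > 7$ and $\theta < 1/9$ one checks $\theta p/10 < \frac{2p}{3} - 3$, so Lemma \ref{anti-gap principle} applied to $w, u$ gives $\lambda(u) \leq 10/\theta$. Combining $\sqrt{p} < \lambda(u) < \theta p / 10$ with $\lambda(u) \leq 10/\theta$ forces both $p > 100/\theta^2$ and $p < 100/\theta^2$, the desired contradiction.

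The main (and only) delicate step is establishing the distinctness of the $p$-power multiples constructed inside $\mathcal{S}_e$; this is precisely where the primitivity hypothesis $\tilde u \notin G^p$ defining $\mathcal{PS}_e$ enters, via injectivity of $\varphi$ on $\text{Sol}(G)$ and the Frobenius action. Once distinctness is in hand, the three statements reduce to mechanical applications of Lemmas \ref{gap principle} and \ref{anti-gap principle} combined with unwinding the definition of $\lambda$.
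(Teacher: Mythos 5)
Your proof is correct and follows essentially the same route as the paper: reduce each claim to the gap and anti-gap principles by pairing elements with appropriate $p$-power multiples inside $\mathcal{S}_e$, with the distinctness of those multiples secured by the primitivity condition defining $\mathcal{PS}$ (a point the paper merely asserts and you usefully spell out via injectivity of $\varphi$ on $\mathrm{Sol}(G)$ and the Frobenius action). The only deviation is in part 2), where you compare $u$ with $p^{k(u)}u_0$ and argue by contradiction rather than comparing with $p^{k(u)+1}u_0$ directly as the paper does; both yield the bound $\lambda(u)\geq \theta p/10$.
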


\begin{proof}
1) We apply Lemma \ref{gap principle} and \ref{anti-gap principle} to the pair $(p^{k(u_2)-k(u_1)}u_1,u_2)$ if $k(u_2) \geq k(u_1)$ and to the pair $(u_1, p^{k(u_1)-k(u_2)}u_2)$ otherwise. We stress that these elements are indeed distinct, since $u_1,u_2 \in \mathcal{PS}$. This gives the desired result. \\
2) This follows from Lemma \ref{anti-gap principle} applied to the pair $(u_1, p^{k(u_1) + 1}u_0)$ for each $u_1$ in ${\mathcal{PS}}_{e}(2)$. \\
3) Use part 1) and the fact that $\frac{3 - \theta}{2 + \theta} > 1$ for $\theta \in (0, \frac{1}{9})$.
\end{proof} 

By part 3) of Lemma \ref{gap and anti gap} it suffices to bound $|\lambda({\mathcal{PS}}_{e})|$. By part 1) and 2) of Lemma \ref{gap and anti gap} it will follow that we can bound $|\lambda({\mathcal{PS}}_{e})|$ purely in terms of $\theta$: thus collecting all the bounds for $e$ varying in $\mathcal{E}$ we obtain a bound depending only on $r$. We now give all the details.

For any $\theta \in (0, \frac{1}{9})$ we have
\[
\frac{3 - \theta}{2 + \theta} > \frac{26}{19}.
\]
Then we find that $|\lambda({\mathcal{PS}}_{e}(1))|$ is at most the biggest $n$ such that 
\[
\left(\frac{26}{19}\right)^{n-1} \leq \frac{10}{\theta}
\]
and similarly for $|\lambda({\mathcal{PS}}_{e}(2))|$. We conclude that
\[
|{\mathcal{PS}}_{e}| \leq 2 + 2\frac{\text{log}(\frac{10}{\theta})}{\text{log}(\frac{26}{19})}.
\]
Multiplying by $|\mathcal{E}|$ gives that for every $\theta \in (0,\frac{1}{9})$
\[
|{\mathcal{PS}}| \leq 2\left(1 + \frac{\text{log}(\frac{10}{\theta})}{\text{log}(\frac{26}{19})}\right)\left(1+\frac{2}{\theta}\right)^r.
\] 
So letting $\theta$ increase to $\frac{1}{9}$ we obtain
\[
|{\mathcal{PS}}| \leq 2\left(1 + \frac{\text{log}(90)}{\text{log}(\frac{26}{19})}\right) 19^r < 31 \cdot 19^r.
\]                   
This completes the proof of Theorem \ref{tXY}.

\section{Proof of Theorem \ref{tVoloch}}
First suppose that $G$ and $K$ are finitely generated. Before we can start with the proof of Theorem \ref{tVoloch}, we will rephrase Theorem \ref{tXY}. Recall that we write $\mathbb{F}_q$ for the algebraic closure of $\mathbb{F}_p$ in $K$.

Then Theorem \ref{tXY} implies that there is a finite subset $S$ of $G$ with $|S| \leq 31 \cdot 19^{r}$ such that any solution of
\[
x + y = 1, (x, y) \in G
\]
with $x \not \in \mathbb{F}_q$ and $y \not \in \mathbb{F}_q$ satisfies $(x, y) = (\gamma, \delta)^{p^t}$ for some $t \in \mathbb{Z}_{\geq 0}$ and $(\gamma, \delta) \in S$.

Now let $(x, y) \in G$ be a solution to
\[
ax + by = 1.
\]
If $ax \in \mathbb{F}_q$ or $by \in \mathbb{F}_q$, it follows that $ax \in \mathbb{F}_q$ and $by \in \mathbb{F}_q$, which implies that $(a, b)^{q - 1} \in G$. Hence Theorem \ref{tVoloch} holds.

So from now on we may assume that $ax \not \in \mathbb{F}_q$ and $by \not \in \mathbb{F}_q$. Define $G'$ to be the group generated by $G$ and the tuple $(a, b)$. Then the rank of $G'$ is at most $r + 1$. Let $S \subseteq G'$ be as above, so $|S| \leq 31 \cdot 19^{r + 1}$. We can write
\[
(ax, by) = (\gamma, \delta)^{p^t}
\]
with $t \in \mathbb{Z}_{\geq 0}$ and $(\gamma, \delta) \in S$. Since
$S \subseteq G'$, we can write
\[
(\gamma, \delta) = (a^k x_0, b^k y_0)
\]
with $k \in \mathbb{Z}$ and $(x_0, y_0) \in G$. This means that
\[
(ax, by) = (a^k x_0, b^k y_0)^{p^t},
\]
which implies $(a, b)^{kp^t - 1} \in G$. If $kp^t - 1$ is co-prime to $p$, we conclude again that Theorem \ref{tVoloch} holds. But $p$ can only divide $kp^t - 1$ if $t = 0$. Then we find immediately that there are at most $|S| \leq 31 \cdot 19^{r + 1}$ solutions as desired.

We still need to deal with the case that $K$ is an arbitrary field of characteristic $p$ and $G$ is a subgroup of $K^\ast \times K^\ast$ with $\text{dim}_\mathbb{Q} \ G \otimes_\mathbb{Z} \mathbb{Q} = r$ finite. Suppose that $ax + by = 1$ has more than $31 \cdot 19^{r + 1}$ solutions $(x, y) \in G$. Then we can replace $G$ by a finitely generated subgroup of $G$ with the same property. We can also replace $K$ by a subfield, finitely generated over its prime field, containing the coordinates of the new $G$ and $a, b$. This gives the desired contradiction.

\section{Acknowledgements}
We are grateful to Julian Lyczak for explaining us how identities as in Lemma \ref{magic identity} follow from basic properties of hypergeometric functions. Many thanks go to Jan-Hendrik Evertse for providing us with this nice problem, his help throughout and the proofreading.

\end{document}